\theoremstyle{lemma}
\theoremstyle{theorem}
\theoremstyle{corollary}
\theoremstyle{definition}
\theoremstyle{definition}
\newtheorem{theorem}{Theorem}[section]
\newtheorem{definition}[theorem]{Definition}
\newtheorem{remark}[theorem]{Remark}
\newtheorem{lemma}[theorem]{Lemma}
\newtheorem{problem}[theorem]{Problem}
\newtheorem{procedure__}[theorem]{Procedure}
\newtheorem*{problem*}{Problem}
\DeclareMathOperator*{\argmin}{argmin}
\newacronym[plural=FOMs]{FOM}{FOM}{Full Order Model}
\newacronym{AGC}{AGC}{Approximate generalized Cauchy point}
\newacronym{ROM}{ROM}{Reduced Order Model}
\newacronym{MOR}{MOR}{Model Order Reduction}
\newacronym{ML}{ML}{Machine Learning}
\newacronym{TR}{TR}{Trust-Region}
\newacronym{RB}{RB}{Reduced Basis}
\newacronym[plural=PINNs]{PINN}{PINN}{Physics-informed Neural Network}
\newacronym{DNN}{DNN}{Deep Neural Network}
\newacronym[plural=PDEs]{PDE}{PDE}{Partial Differential Equation}
\newacronym[plural=pPDEs]{pPDE}{pPDE}{parameterized PDE}
\newacronym{VKOGA}{VKOGA}{Vectorial Kernel Orthogonal Greedy Algorithm}
\newacronym{DoF}{DoF}{Degree of Freedom}
\newacronym{HaPOD}{HaPOD}{Hierarchical approximate POD}
\newacronym{FE}{FE}{Finite Element}
\newacronym{BFGS}{BFGS}{Broyden, Fletcher, Goldfarb, Shanno}
\begin{document}

\title{Multi-fidelity Learning of Reduced Order Models for Parabolic PDE Constrained Optimization}

\author*[1]{\fnm{Benedikt} \sur{Klein} \orcidlink{0009-0004-1909-3574}}\email{benedikt.klein@uni-muenster.de}
\author[2]{\fnm{Mario} \sur{Ohlberger} \orcidlink{0000-0002-6260-3574}}\email{mario.ohlberger@uni-muenster.de}

\affil[1,2]{\orgdiv{Mathematics Münster}, \orgname{University of Münster}, \orgaddress{\street{Einsteinstrasse 62}, \city{Münster}, \postcode{48149}, \country{Germany}}}


\abstract{This article builds on the recently proposed RB-ML-ROM approach for parameterized parabolic PDEs and proposes a novel hierarchical Trust Region algorithm for solving parabolic PDE constrained optimization problems. 
Instead of using a traditional offline/online splitting approach for model order reduction, we adopt an active learning or enrichment strategy to construct a multi-fidelity hierarchy of reduced order models on-the-fly during the outer optimization loop.
The multi-fidelity surrogate model consists of a full order model, a reduced order model and a machine learning model. 
The proposed hierarchical framework adaptively updates its hierarchy when querying parameters, utilizing a rigorous a posteriori error estimator in an error aware trust region framework. Numerical experiments are given to demonstrate the efficiency of the proposed approach.}

\keywords{Reduced order models, multi-fidelity learning, parabolic PDE constrained optimization, trust region algorithm}

\pacs[MSC Classification]{49M20, 49K20, 35J20, 65N30, 90C06}

\maketitle

\section{Introduction}
Optimization problems governed by \glspl{PDE} play a crucial role in various fields, including physics, engineering, and economics, as they enable the modeling, solution and optimization of complex systems 
that involve spatially distributed phenomena. 
The ability to efficiently solve such optimization problems has far-reaching implications for applications like optimal control, 
inverse problems, and design optimization. Typical algorithms require numerous evaluations of the underlying \gls{PDE} for 
various parameters that are selected in an outer descent-type optimization loop.
In this contribution we are concerned with a novel multi-fidelity learning approach to speed up such algorithms for 
parameter optimization or optimal control problems with parabolic \gls{PDE} constraints. 
The approach builds on a hierarchy of approximate models with increasing accuracy and decreasing efficiency, 
consisting of a machine learning surrogate, a reduced basis model and a full order finite element model.
Such an approach has recently been proposed as a certified “on demand” \acrshort{RB}-\acrshort{ML}-\acrshort{ROM} learning approach to approximate parametrized input-output maps \cite{haasdonk2022new}.
In order to efficiently handle optimization problems, we integrated this approach into the error aware trust region framework \cite{yue2013accelerating} that has recently been advanced theoretically and numerically in \cite{Keil_2021,banholzer2020adaptiveprojectednewtonnonconforming,keil2024relaxed}.
We now give a more detailed overview to the various techniques involved in our mutli-fidelity approach. \\

\textbf{Model order reduction for PDE constrained optimization.}
\gls{MOR} methods have gained significant attention in recent years for efficiently solving parameterized \glspl{PDE} \cite{MR3701994, MR3672144}. These methods aim to reduce the computational complexity by replacing high-dimensional \glspl{FOM} with low-dimensional surrogates. 

A promising approach is the usage of \gls{RB} methods, which rely on approximating the solution manifold of \glspl{pPDE} by low-dimensional linear approximation spaces. We particularly refer to the POD-Greedy method \cite{Haasdonk2008,Haasdonk13} as well as the \gls{HaPOD} \cite{himpe2018hierarchical} for parameterized parabolic \glspl{PDE}. There is a large amount of literature using such reduced models for \gls{PDE} constrained optimization, cf.\cite{KTV13,GK2011,NRMQ2013}. 
For an introduction to \gls{MOR} methods for optimal control problems we refer, e.g., to \cite{MR4628189}. 
More recently, \gls{RB} methods have been advised with a progressive construction of \glspl{ROM}
\cite{ZF2015,GHH2016,MR3823611}
which has lead to a serious of works on error aware trust region reduced basis methods.\\

\textbf{Error aware Trust-Region -- Reduced Basis methods.}
\Gls{TR} approaches are a class of optimization methods that build on the iterative 
solution of appropriately chosen locally accurate surrogate models. 
The main idea is to solve optimization sub-problems only in a local
area of the parameter domain which resolves the burden of constructing a global \Gls{RB} space.
\Gls{TR} optimization methods have been successfully combined with \Gls{MOR} to leverage locally accurate surrogate models, ensuring global convergence while reducing the computational cost \cite{yue2013accelerating,RoggTV17}. 
Significant further development and analysis of such approaches were presented in 
\cite{qian2017certified,Keil_2021,banholzer2020adaptiveprojectednewtonnonconforming,keil2024relaxed}.
We also refer to a related approach with application to shape optimization problems \cite{MR4569234}.\\

\textbf{Data based surrogate modeling and machine learning approaches.}
In parallel to model-based reduction methods, purely data-based
approaches for machine learning of surrogate models have increasingly been developed
and mathematically investigated \cite{hesthaven2018non,Kutyniok22,fresca2022pod,ManzoniHesthaven23,FrescaManzoni24}.
Our methodology exploits the strengths of both model-based and data-driven approaches, 
enabling the construction of certified, adaptive, and efficient surrogate models. 
The general idea of a  full order, reduced order and machine learning model
pipeline has first been introduced in \cite{MR4406092} and subsequently been 
put in a certified \Gls{RB}-\acrshort{ML}-\Gls{ROM} framework in \cite{haasdonk2022new}. 
Despite deep neural networks, also kernel methods and novel deep kernel approaches 
have been investigated and compared \cite{wenzel2023application}. 
For further learning based approaches for related optimization problems
we refer to \cite{lye2021iterative, keil2022adaptive, xu2020physicsconstrainedlearningdatadriven,MR4848008}
We also note that these approaches are complementary to unsupervised learning approaches, such as physics-informed neural networks  \cite{raissi2019physics} or, e.g., deep Ritz methods \cite{MR3767958}.\\

\textbf{Main results.}
This work demonstrates the potential of combining the above mentioned approaches to 
tackle parameter optimization with parabolic \gls{PDE} constraints or corresponding optimal control problems. 
We thereby integrate certified and adaptive model order reduction with machine learning techniques in a trust region 
optimization framework. 
By doing so in a multi-fidelity fashion, we overcome the limitations of traditional global model reduction methods and purely machine learning based approaches that typically come without error certification.
Particular novelties of this contribution include

\begin{itemize}
\item a posteriori error estimates for the approximate objective function and its derivatives,
\item a concept for trust region multi-fidelity optimization,
\item the integration of the \gls{RB}-\acrshort{ML}-\gls{ROM} surrogate modeling approach with the error aware \gls{TR} optimization method, 
\item an efficient implementation of the resulting multi-fidelity \gls{TR}-\gls{RB}-\acrshort{ML}-Opt approach based on the model reduction software framework pyMOR \cite{Milk_2016},
\item a numerical proof of concept that demonstrate that our new \gls{TR}-\gls{RB}-\acrshort{ML}-Opt method outperforms
existing model reduction approaches. 
\end{itemize}

\textbf{Organization of the article.}
In Section \ref{sec:problem} we introduce the parabolic \gls{PDE} constrained optimization problem and 
discuss the adjoint problem resulting from the optimality conditions. A suitable reduced basis approximation and corresponding a posteriori error estimates are discussed in Section \ref{sec:MOR}. Section \ref{sec:ML_surrogates} then introduces 
machine learning surrogates and adaptations that are necessary with respect to a posteriori error estimation. 
Finally, the trust region multi-fidelity optimization framework is introduced in Section \ref{sec:TR}. Numerical experiments in Section \ref{sec:num_exps} demonstrate the performance of the resulting approach in comparison to previous \gls{TR}-\gls{RB} approaches.

\section{Problem Formulation}
\label{sec:problem}

Let $T > 0$ be a fixed end-time and $\Omega \subset \mathbb{R}^d$ a bounded domain with a Lipschitz continuous boundary $\partial \Omega$. We define the Hilbert spaces $\mathbb{D}$ and $V$, equipped with inner products $\langle\cdot, \cdot\rangle_{\mathbb{D}}$ and $\langle \cdot, \cdot \rangle_{V}$, along with their induced norms $\| \cdot \|_{\mathbb{D}} := \sqrt{\langle \cdot, \cdot \rangle_{\mathbb{D}}}$ and $\| \cdot \|_{V} := \sqrt{\langle \cdot, \cdot \rangle_{V}}$. These spaces satisfy the embedding hierarchy ${H^1_0(\Omega) \subset V \subset H^1(\Omega) \subset L^2(\Omega) \subset V'}$. 

Additionally, let $\mathcal{P} \subset \mathbb{R}^P$ be a simple bounded parameter domain, defined as $\mathcal{P} := \{x \in \mathbb{R}^P \mid L_j \leq x_j \leq U_j, \quad j = 1, \dots, P\}$, where $-\infty < L_j < U_j < \infty$. Our goal is to determine a parameter $\mu \in \mathcal{P}$ that minimizes the time-discretized least-squares objective functional relative to a desired state $g_\text{ref} \in L^2(0,T; \mathbb{D})$: 
\begin{equation*}
J(u(\mu); \mu) := \Delta t \sum_{k = 1}^K \|F(u^k(\mu)) - g^k_\text{ref} \|_{\mathbb{D}}^2 + \lambda \mathcal{R}(\mu),
\end{equation*}
where $g^k_\text{ref} := g_\text{ref}(t^k)$ are evaluations of $g_\text{ref}$ at a uniform temporal grid ${\mathcal{G}^\text{pr}_{\Delta t} := \{0 = t^0 < \dots < t^K = T\}}$ with time step $\Delta t := t^{i+1} - t^i$. The state trajectory $u(\mu) := (u^k(\mu))_{k \in \{0, \dots, K\}}$ satisfies the time-discretized parabolic PDE: 
\begin{align}
\begin{split}
\label{eq:primal_analytic_PDE}
\frac{1}{\Delta t}(u^k(\mu)-u^{k-1}(\mu), v )_{L^2(\Omega)} + a(u^k(\mu), v ; \mu) &= b(t^k)f(v; \mu), \quad
u^0(\mu) = 0,
\end{split}
\end{align}
for all $k \in \mathbb{K} := \{1, \dots, K\}$ and $v \in V$. Here, for each $\mu \in \mathcal{P}$, the operator ${f(\cdot\,; \mu) : V \rightarrow \mathbb{R}}$ is a $V$-continuous, twice continuously differentiable linear functional with derivatives $\partial_{\mu_i} f(\cdot\,; \mu) \in V '$ for $i \in \{  1, \dots, P \}$. The bilinear form ${a(\cdot, \cdot\,; \mu) : V \times V \rightarrow \mathbb{R}}$ is $V$-continuous, coercive, symmetric, and twice continuously differentiable in $\mu$, with derivatives $a_{\mu_i} := \partial_{\mu_i} a(\cdot, \cdot\,; \mu)$. The mapping $F: V \rightarrow \mathbb{D}$ computes a quantity of interest for states in $V$, while $b : [0,T] \rightarrow \mathbb{R}$ represents a time-dependent forcing term. Regularization is ensured by a twice continuously differentiable function $\mathcal{R} : \mathcal{P} \rightarrow \mathbb{R}$ with weight $\lambda \in \mathbb{R}_{\geq 0}$. Henceforth, \eqref{eq:primal_analytic_PDE} will be referred to as the \textit{primal problem}.

For the construction of an a posteriori error estimator, we define the inner product on $V$ using the $a$-energy product at a fixed $\bar{\mu} \in \mathcal{P}$, i.e., $\langle \cdot, \cdot \rangle_V := a(\cdot, \cdot\,; \bar{\mu})$. We assume that for any given $\mu \in \mathcal{P}$, the coercivity and continuity constants
\begin{align*}
0 < \gamma_a(\mu) := \sup_{w \in V \setminus \{ 0 \}} \sup_{v \in V \setminus \{ 0 \}} & \frac{a(w, v ; \mu)}{\|w\|_V \|v\|_V} < \infty, \\
0 < \gamma_{a_{\mu_i}}(\mu) := \sup_{w \in V \setminus \{ 0 \}} \sup_{v \in V \setminus \{ 0 \}} & \frac{a_{\mu_i}(w, v ; \mu)}{\|w\|_V \|v\|_V} < \infty \text{ and } \\
0 < \alpha_0^a \leq \alpha(\mu) := \inf_{v \in V \setminus \{ 0 \}} & \frac{a(v, v ; \mu)}{\|v\|^2_V},
\end{align*}
can be bounded from below, respectively from above. I.e. we assume that $\alpha_\text{LB}(\mu)$ and $\gamma_{a_{\mu_i}}^{UB}(\mu)$ exist such that
\begin{gather*}
0 < \alpha_0^a \leq \alpha_\text{LB}(\mu) \leq \alpha(\mu) \text{ and }
\gamma_{a_{\mu_i}}^{\text{UB}}(\mu) \geq \gamma_{a_{\mu_i}}(\mu).
\end{gather*}
Furthermore, we assume that $a$ and $f$ are \textit{affinely decomposable}, meaning that
\begin{equation*}
a(w, v; \mu) = \sum_{q =  1}^{Q_a} \Theta^q_a(\mu) a^q(w,v), \quad f(v; \mu) = \sum_{\tilde{q} =  1}^{Q_f} \Theta^{\tilde{q}}_f(\mu) f^{\tilde{q}}(v),
\end{equation*}
where the functions $\Theta^q_a, \Theta^{\tilde{q}}_f: \mathcal{P} \rightarrow \mathbb{R}$ are twice continuously differentiable, while $a^q: V \times V \rightarrow \mathbb{R}$ and $f^{\tilde{q}}: V \rightarrow \mathbb{R}$ are parameter-independent continuous (bi)linear forms.
For notational convenience, we introduce:
\begin{equation*}
d(w, v) := \left(F(w),F(v) \right)_\mathbb{D}, \quad l^k(v) := -2 \left(F(v),g^k_\text{ref} \right)_\mathbb{D} \text{ for } v, w \in V,
\end{equation*}
with continuity constants $\gamma_d$ and $\gamma_l$. Throughout this work, the state trajectory $u(\mu)$ belongs to the Bochner-type Hilbert space
\begin{equation}
\label{eq:def_Q_space}
Q^\text{pr}_{\Delta t}(0,T; V) := \left\{ f: \mathcal{G}^\text{pr}_{\Delta t} \rightarrow V \mid f(t^{0}) = 0, \quad \|f\|_{Q^\text{pr}_{\Delta t}(0,T; V)} < \infty \right\},
\end{equation}
with inner product 
\begin{equation*}
\langle f, g \rangle_{Q^\text{pr}_{\Delta t}(0,T; V)} := \sum_{k = 1}^K \langle f(t^k), g(t^k) \rangle_{V},
\end{equation*}
and corresponding norm $\|\cdot\|_{Q^\text{pr}_{\Delta t}(0,T; V)}$. The optimization problem of interest is therefore:
\begin{problem}
Consider the minimization problem
\label{prob:t_dis_opt_prob}
\begin{gather*}
\argmin_{\mu \in \mathcal{P}} \mathcal{J}(\mu), \\
\begin{align*}
\mathcal{J}(\mu) & := J(u(\mu); \mu) = \Delta t \sum_{k = 1}^K \left[d(u^k(\mu), u^k(\mu)) + l^k(u^k(\mu)) +  \left(g^k_\text{ref}, g^k_\text{ref} \right)_\mathbb{D} \right] + \lambda \mathcal{R}(\mu),
\end{align*}
\end{gather*}
where  $u(\mu) \in Q^\text{pr}_{\Delta t}(0,T,V)$ satisfies \eqref{eq:primal_analytic_PDE}.
\end{problem}

\begin{remark}
Following the assumptions made above, for all $\mu \in \mathcal{P}$, a unique solution to \eqref{eq:primal_analytic_PDE} exists. Thus, the primal solution operator $A^{\text{pr}} : \mathcal{P} \rightarrow Q^{\text{pr}}_{\Delta t}(0,T;V);\,{\mu \mapsto u(\mu)}$ is well-defined. Moreover, it can be shown, cf. \cite{hinze2009optimization}, that $A^{\text{pr}}$ and thus $\mathcal{J}(\mu)$ are continuously differentiable with respect to $\mu$. The $i$-th partial derivative will be denoted by $d_{\mu_i}\mathcal{J}(\mu)$. Furthermore, the Fr\'{e}chet derivative is denoted by $\partial_\mu \mathcal{J}(\mu): \mathcal{P} \rightarrow \mathbb{R}$. Note that $d_{\mu_i} \mathcal{J}(\mu) = \partial_\mu \mathcal{J}(\mu)\cdot e_i$, where $e_i$ is the $i$-th canonical unit vector.
\end{remark}

\subsection{The Adjoint Problem}
To solve optimization problem \ref{prob:t_dis_opt_prob} using a gradient descent type method, efficient computation of the derivatives $d_{\mu_i} \mathcal{J}(\mu)$ is required. This can be achieved by an adjoint approach as outlined in \cite{hinze2009optimization, qian2017certified, Keil_2021}. For a fixed $\mu \in \mathcal{P}$, let ${u(\mu) \in Q^{\text{pr}}_{\Delta t}(0,T; V)}$ be the solution to \eqref{eq:primal_analytic_PDE}. The \textit{adjoint trajectory} associated to $u(\mu)$, denoted ${p(\mu) := (p^k(\mu))_{k \in \{1,\dots, K+1\}}}$, is given as the unique solution of
\begin{align}
\label{eq:dual_analytic_PDE}
\begin{split}
\frac{1}{\Delta t} (v, p^k(\mu)-p^{k+1}(\mu))_{L^2(\Omega)} + a(v, p^k(\mu); \mu) &= 2d(u^k(\mu), v) + l^k(v),\\
p^{K+1}(\mu) &= 0,
\end{split}
\end{align}
for all $k \in \mathbb{K}$ and $v \in V$. The Bochner-type Hilbert space $Q^{\text{ad}}_{\Delta t}(0,T; V)$ is defined analogously to \eqref{eq:def_Q_space},
\begin{equation*}
Q^{\text{ad}}_{\Delta t}(0,T; V) := \left\{ f: \mathcal{G}^{\text{ad}}_{\Delta t} \rightarrow V \mid f(t^{K+1}) = 0, \quad \|f\|_{Q^{\text{ad}}_{\Delta t}(0,T; V)} < \infty \right\},
\end{equation*}
with $\mathcal{G}^{\text{ad}}_{\Delta t} := \lbrace \Delta t =: t^1 < \dots < t^{K+1} := T + \Delta t \rbrace$. Following \cite{qian2017certified, hinze2009optimization}, the first-order derivative $d_{\mu_i} \mathcal{J}(\mu)$ for $i \in \{1, \dots, P\}$ can be computed as
\begin{equation}
\label{eq:nabla_J_tilde_op}
d_{\mu_i} \mathcal{J}(\mu) = \Delta t \sum_{k = 1}^K \left[b(t^k)f_{\mu_i}(p^k(\mu); \mu) - a_{\mu_i}(u^k(\mu), p^k(\mu); \mu)  \right] + \lambda d_{\mu_i} \mathcal{R}(\mu).
\end{equation}
This adjoint-based approach provides computational efficiency by solving \eqref{eq:dual_analytic_PDE} instead of evaluating each derivative separately.

\begin{remark}
The formulation \eqref{eq:nabla_J_tilde_op} is equivalent to $d_{\mu_i} \mathcal{J}(\mu) = d_{\mu_i} \mathcal{L}(u(\mu), \mu, p(\mu))$ (cf. \cite{Keil_2021}), where $\mathcal{L}$ is the Lagrangian for Problem \ref{prob:t_dis_opt_prob}, given by
\begin{align*}
\mathcal{L}(u(\mu), \mu, p(\mu)) = \mathcal{J}(\mu) + & \sum_{k = 1}^K b(t^k)f(p^k(\mu) ; \mu) - a(u^k(\mu), p^k(\mu) ; \mu) \\
- & \frac{1}{\Delta t} (u^k(\mu) - u^{k - 1}(\mu), p^k(\mu) )_{L^2(\Omega)}.
\end{align*}
\end{remark}

For solving \eqref{eq:primal_analytic_PDE} and its adjoint problem \eqref{eq:dual_analytic_PDE}, a conventional \gls{FE} discretization approach is used. Consider a finite-dimensional Hilbert space $V_h \subset V$, equipped with the inner product $\langle \cdot, \cdot \rangle_{V}$. By projecting the primal \eqref{eq:primal_analytic_PDE} and adjoint problems \eqref{eq:dual_analytic_PDE} into $V_h$ (Galerkin projection), we obtain well-defined approximations $u_h(\mu) \in Q^{\text{pr}}_{\Delta t}(0,T; V_h)$ and $p_h(\mu) \in Q^{\text{ad}}_{\Delta t}(0,T; V_h)$ for the exact solutions $u(\mu)$ and $p(\mu)$. The corresponding objective functional and gradient are given by
\begin{equation}
\label{eq:def_J_h}
\mathcal{J}_h(\mu) := J(u_h(\mu); \mu), \quad d_{\mu_i} \mathcal{J}_h(\mu) = d_{\mu_i} \mathcal{L}(u_h(\mu), \mu, p_h(\mu)).
\end{equation}
We refer to the computational model that determines, for a given parameter $\mu \in \mathcal{P}$, the high-fidelity objective functional $\mathcal{J}h(\mu)$ and its gradient ${\nabla\mu \mathcal{J}h(\mu) := (\partial{\mu_1} \mathcal{J}h(\mu), \dots, \partial{\mu_P} \mathcal{J}_h(\mu))}$ as the \acrfull{FOM}.
\section{Reduced basis method}
\label{sec:MOR}
Ensuring satisfactory accuracy of the \glspl{FOM} typically requires a high dimension of $V_h$, making their evaluation computationally expensive. This is especially challenging in optimization, due to the high number of parameter queries required. To mitigate the computational burdens of high-fidelity simulations, \Gls{RB} methods have gained considerable interest and demonstrated substantial reductions in computation time \cite{Keil_2021, qian2017certified, quarteroni2015reduced}. The properties of \Gls{RB} models make them also genuinely compatible with the trust-region method as will discussed in detail in Section \ref{sec:TR}. Moreover, can \Gls{RB} methods act as vital cornerstone for integrating machine learning-based surrogates in optimization problems, as a feasible option, as we will see in Section \ref{sec:ML_surrogates}.

In RB methods, the high-dimenstional space $V_h$ is replaced by a subspace ${V_{\text{RB}} \subset V_h}$, with a significantly lower dimension and the problems \eqref{eq:primal_analytic_PDE} and \eqref{eq:dual_analytic_PDE} are solved, after projecting them into this space, allowing for faster evaluations due to the reduced dimension. Suppose a $V_h$-orthonormal basis $\{\psi_i,\, i = 1, \dots, N_\text{RB} \}$ is given. This \textit{reduced basis}, define the reduced state space
\begin{equation}
\label{eq:def_RB_space}
V_\text{RB} := \text{span} \{\psi_i,\, i = 1, \dots, N_\text{RB} \} \subset V_h.
\end{equation}
The \Gls{RB} approximation $u_\text{RB}(\mu) \in Q^\text{pr}_{\Delta t}(0,T; V_\text{RB})$ to the primal high-fidelity trajectory $u_h(\mu)$  thus satisfies for all $k \in \mathbb{K}$ and $v \in V_\text{RB}$
\begin{align}
\label{eq:primal_ROM_PDE}
\begin{split}
\frac{1}{\Delta t}(u_\text{RB}^k(\mu)-u_\text{RB}^{k-1}(\mu), v )_{L^2(\Omega)} + a(u_\text{RB}^k(\mu), v ; \mu) &= b(t^k)f(v; \mu),  \quad
u^0_\text{RB}(\mu) = 0.
\end{split}
\end{align}
Similarly, the approximation of the adjoint solution $ p_\text{RB}(\mu) \in Q^\text{ad}_{\Delta t}(0,T; V_\text{RB})$ associated to $u_\text{RB}(\mu)$ will be obtained by solving
\begin{align}
\label{eq:dual_ROM_PDE}
\begin{split}
\frac{1}{\Delta t}(v, p_\text{RB}^k(\mu)-p_\text{RB}^{k+1}(\mu))_{L^2(\Omega)} + a(v, p_\text{RB}^k(\mu); \mu) &= 2d(u_\text{RB}^k(\mu), v) + l^k(v), \\
p_\text{RB}^{K + 1}(\mu) &= 0, 
\end{split}
\end{align}
for all $k \in \mathbb{K}$ and $v \in V_\text{RB}$. 
The RB objective functional follows naturally from these definitions.
\begin{definition}
\label{def:J_RB}
Let $\mu \in \mathcal{P}$ and $V_\text{RB}$ as defined in \eqref{eq:def_RB_space} be given. The \textit{\Gls{RB} approximate objective functional and gradient} are defined as
\begin{equation*}
\mathcal{J}_\text{RB}(\mu) := J(u_\text{RB}(\mu); \mu) \text{ and } d_{\mu_i} \mathcal{J}_\text{RB}(\mu) = 
d_{\mu_i}\mathcal{L}(u_\text{RB}(\mu), \mu, p_\text{RB}(\mu)),
\end{equation*}
where $u_\text{RB}(\mu) \in Q^\text{pr}_{\Delta t}(0,T; V_\text{RB})$ is the solution of
\eqref{eq:primal_ROM_PDE} and $p_\text{RB}(\mu) \in Q^\text{ad}_{\Delta t}(0,T; V_\text{RB})$ of \eqref{eq:dual_ROM_PDE}, associated to $u_\text{RB}(\mu)$. The model returning these approximations will be referred to as \textit{reduced basis-reduced order model (RB-ROM)}.
\end{definition}

\subsection{A Posteriori Error Estimator}
\label{sssec:a_post_err_est}

For \gls{TR} methods, \gls{RB} approximations come with online-efficient a posteriori error estimators for the approximation error $|\mathcal{J}_h(\mu) - \mathcal{J}_\text{RB}(\mu)|$ and its gradient. A model is considered \textit{certified} if such an error estimator exists. We summarize key aspects of \gls{RB} error estimation here; for further details, see \cite{qian2017certified, grepl2005posteriori, haasdonk_min_theta}. We begin by defining the residual operators for the reduced linear problems \eqref{eq:primal_ROM_PDE} and \eqref{eq:dual_ROM_PDE}.

\begin{definition}[Residual Operators]
\label{def:residual_ops}
For $k \in \mathbb{K}$, define the residual operator as
\begin{gather*}
r^k_\text{pr} : Q^\text{pr}_{\Delta t}(0,T; V_h) \times V_h \times \mathcal{P} \to \mathbb{R},\\
(u, v ; \mu) \mapsto r^k_\text{pr}(u, v ; \mu) := b(t^k)f(v; \mu) - a(u^k, v; \mu) - \frac{1}{\Delta t} (u^k - u^{k - 1}, v )_{L^2(\Omega)}
\end{gather*}
for the primal case and
\begin{gather*}
r^k_\text{ad} : Q^\text{pr}_{\Delta t}(0,T; V_h) \times Q^\text{ad}_{\Delta t}(0,T; V_h) \times V_h \times \mathcal{P} \to \mathbb{R},\\
(u, p, v ; \mu) \mapsto r^k_\text{ad}(u, p, v ; \mu) := 2 d(u^k, v) + l^k(v) - a(v, p^k; \mu)- \frac{1}{\Delta t} (v, p^k - p^{k + 1})_{L^2(\Omega)}
\end{gather*}
for the adjoint case. Define
\begin{gather*}
r_\text{pr}(u, v ; \mu) := \left(r^k_\text{pr}(u, v ; \mu) \right)_{k \in \mathbb{K}} \in \mathbb{R}^K,\\
r_\text{ad}(u, p, v ; \mu) := \left(r^k_\text{ad}(u, p, v ; \mu) \right)_{k \in \mathbb{K}} \in \mathbb{R}^K.
\end{gather*}
\end{definition}
To facilitate further discussion, we define the operators
\begin{gather*}
\mathcal{S} : V_h^K \to \mathbb{R}, \quad u \mapsto \left(\Delta t \sum_{k = 1}^K \|u^k\|_{V_h}^2 \right)^{1/2},\\
\mathcal{T} : (V_h')^K \to \mathbb{R}, \quad f(\cdot) \mapsto \left(\Delta t \sum_{k = 1}^K \|f^k(\cdot)\|_{V'_h}^2 \right)^{1/2}.
\end{gather*}
Before defining a posteriori error estimators for $\mathcal{J}_\text{RB}$ and $\nabla_\mu \mathcal{J}_\text{RB}$, estimators for the reduced solutions must be established.
\begin{lemma}[Error Bounds]
\label{lem:error_bound_op}
Let the trajectory $u \in Q^\text{pr}_{\Delta t}(0,T; V_\text{RB})$ be given and let ${p(u; \mu) \in Q^\text{ad}_{\Delta t}(0,T; V_\text{RB})}$ solve \eqref{eq:dual_ROM_PDE} for $\mu \in \mathcal{P}$ and the right-hand side defined by $u$. Furthermore, let $u_h(\mu) \in Q^\text{pr}_{\Delta t}(0,T; V_h)$ and $p_h(\mu) \in Q^\text{ad}_{\Delta t}(0,T; V_h)$ be the high-fidelity solutions obtained by the \gls{FOM}. We then have for $e_\text{pr}(\mu) := u_h(\mu) - u(\mu)$ and $e_\text{ad}(\mu) := p_h(\mu) - p(u; \mu)$
\begin{gather}
\label{eq:primal_error_est}
\mathcal{S}(e_\text{pr}(\mu)) \leq \Delta^\text{pr}(u, \mu) := \alpha^{-1}_\text{LB}(\mu) \mathcal{T}(r_\text{pr}(u,\cdot\,; \mu))
\end{gather}
and
\begin{align}
\begin{split}
\label{eq:dual_error_est}
\mathcal{S}(e_\text{ad}(\mu)) &\leq \Delta^\text{ad}(u, \mu) 
 := \alpha^{-1}_\text{LB}(\mu) \left(8\gamma_d^2 (\Delta^\text{pr}(u, \mu))^2+ 2 \mathcal{T}^2(r_\text{ad}(u, p(u; \mu), \cdot\,; \mu)) \right)^{\frac{1}{2}}.\qquad
\end{split}
\end{align}
\end{lemma}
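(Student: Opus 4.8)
The plan is to derive, for both the primal and the adjoint error, an \emph{error–residual identity} by subtracting the high-fidelity and reduced equations, then to test this identity against the error itself and exploit coercivity together with a telescoping (discrete Gronwall) argument in the $L^2$-term. Since the trajectories $u$ and $u_h(\mu)$ both lie in $V_h$ (with $V_\text{RB} \subset V_h$) and $V_h$ carries the restriction of $\langle\cdot,\cdot\rangle_V$, the coercivity norm $\|\cdot\|_V$ and the norm $\|\cdot\|_{V_h}$ entering $\mathcal{S}$ coincide on the errors.

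For the primal bound I first note that $u_h(\mu)$ satisfies $r^k_\text{pr}(u_h, v; \mu) = 0$ for all $v \in V_h$ and $k \in \mathbb{K}$. Subtracting this from the definition of $r^k_\text{pr}(u, \cdot; \mu)$ and using the linearity of $a(\cdot, v; \mu)$ and of the $L^2$-term gives the identity
\[
a(e_\text{pr}^k, v; \mu) + \tfrac{1}{\Delta t}(e_\text{pr}^k - e_\text{pr}^{k-1}, v)_{L^2(\Omega)} = r^k_\text{pr}(u, v; \mu) \quad \forall v \in V_h,
\]
with $e_\text{pr}^0 = 0$ because both trajectories start at zero. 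Testing with $v = e_\text{pr}^k$, I bound the bilinear form below by $\alpha_\text{LB}(\mu)\|e_\text{pr}^k\|_{V_h}^2$ and use $(e_\text{pr}^k - e_\text{pr}^{k-1}, e_\text{pr}^k)_{L^2(\Omega)} \geq \tfrac12(\|e_\text{pr}^k\|_{L^2(\Omega)}^2 - \|e_\text{pr}^{k-1}\|_{L^2(\Omega)}^2)$. Multiplying by $\Delta t$ and summing over $k$, the $L^2$-contributions telescope to the nonnegative term $\tfrac12\|e_\text{pr}^K\|_{L^2(\Omega)}^2$, which I discard. On the right I estimate $r^k_\text{pr}(u, e_\text{pr}^k; \mu) \leq \|r^k_\text{pr}(u, \cdot; \mu)\|_{V_h'}\|e_\text{pr}^k\|_{V_h}$ and apply the discrete Cauchy–Schwarz inequality in $k$, which produces exactly $\mathcal{T}(r_\text{pr}(u, \cdot; \mu))\,\mathcal{S}(e_\text{pr}(\mu))$. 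Dividing by $\mathcal{S}(e_\text{pr}(\mu))$ (the bound being trivial otherwise) yields \eqref{eq:primal_error_est}.

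For the adjoint bound the same scheme applies, but subtracting the \gls{FOM} and reduced adjoint equations now produces an extra term, because the reduced adjoint \eqref{eq:dual_ROM_PDE} is driven by $u$ whereas the \gls{FOM} adjoint is driven by $u_h(\mu)$. Carrying this through gives
\[
a(v, e_\text{ad}^k; \mu) + \tfrac{1}{\Delta t}(v, e_\text{ad}^k - e_\text{ad}^{k+1})_{L^2(\Omega)} = r^k_\text{ad}(u, p(u;\mu), v; \mu) + 2 d(e_\text{pr}^k, v) \quad \forall v \in V_h,
\]
with $e_\text{ad}^{K+1} = 0$. Testing with $v = e_\text{ad}^k$ and summing (the telescoping now runs backward in time, leaving the nonnegative boundary term $\tfrac12\|e_\text{ad}^1\|_{L^2(\Omega)}^2$, which I again drop), I obtain $\alpha_\text{LB}(\mu)\,\mathcal{S}(e_\text{ad}(\mu))^2 \leq \mathcal{T}(r_\text{ad})\,\mathcal{S}(e_\text{ad}(\mu)) + 2\gamma_d\,\mathcal{S}(e_\text{pr}(\mu))\,\mathcal{S}(e_\text{ad}(\mu))$, where the cross term is controlled by the continuity constant $\gamma_d$ of $d$ followed by a discrete Cauchy–Schwarz step in $k$. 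Dividing by $\mathcal{S}(e_\text{ad}(\mu))$ and inserting the already-established primal bound $\mathcal{S}(e_\text{pr}(\mu)) \leq \Delta^\text{pr}(u, \mu)$ gives $\alpha_\text{LB}(\mu)\,\mathcal{S}(e_\text{ad}(\mu)) \leq \mathcal{T}(r_\text{ad}(u, p(u;\mu), \cdot; \mu)) + 2\gamma_d \Delta^\text{pr}(u, \mu)$.

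The final step is purely algebraic: applying $(x+y)^2 \leq 2x^2 + 2y^2$ to the right-hand side converts the sum $\mathcal{T}(r_\text{ad}) + 2\gamma_d\Delta^\text{pr}$ into $\left(2\mathcal{T}^2(r_\text{ad}) + 8\gamma_d^2(\Delta^\text{pr})^2\right)^{1/2}$, which is precisely $\alpha_\text{LB}(\mu)\,\Delta^\text{ad}(u, \mu)$; dividing by $\alpha_\text{LB}(\mu)$ completes \eqref{eq:dual_error_est}. I expect the main obstacle to be the adjoint error–residual identity: one must carefully track the cross term $2d(e_\text{pr}^k, \cdot)$ stemming from the mismatch between the primal states $u$ and $u_h(\mu)$ that drive the two adjoint problems, and handle the reversed-time telescoping with the terminal condition $e_\text{ad}^{K+1} = 0$ correctly. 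The remainder is a routine coercivity-plus-Cauchy–Schwarz computation.
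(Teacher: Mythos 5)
Your proposal is correct and follows essentially the same route as the paper, whose proof simply defers to Lemma~8 in \cite{qian2017certified} together with a Cauchy--Schwarz/Young step: that cited argument is exactly your error--residual identity tested against the error, with coercivity, telescoping of the $L^2$-terms (forward in time for the primal, backward with $e_\text{ad}^{K+1}=0$ for the adjoint), the cross term $2d(e_\text{pr}^k,\cdot)$ from the mismatched primal states, and finally $(x+y)^2 \leq 2x^2+2y^2$ producing the constants $8\gamma_d^2$ and $2$ in \eqref{eq:dual_error_est}. In effect you have written out in full the proof the paper leaves implicit, and all steps check out.
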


\begin{proof}
See Lemma 8 in \cite{qian2017certified} and apply the Cauchy-Schwarz inequality.
\end{proof}
Given $\Delta^\text{pr}(u, \mu)$ and $\Delta^\text{ad}(u, \mu)$ for $u \in Q^\text{pr}_{\Delta t}(0,T; V_\text{RB})$, the errors of the associated \Gls{RB} objective functional and its gradient can be derived as follows.
\begin{theorem}[A posteriori objective error estimates]
\label{thm:obj_error_op}
For $\mu \in \mathcal{P}$, let the high-fidelity cost functional $\mathcal{J}_h$ and the \gls{RB} approximation $\mathcal{J}_\text{RB}$ be defined as in \eqref{eq:def_J_h} and Definition \ref{def:J_RB}. Then, the following bounds hold:
\begin{gather*}
| \mathcal{J}_h(\mu) - \mathcal{J}_\text{RB}(\mu) | \leq \Delta^{\mathcal{J}_\text{RB}}(\mu),
\end{gather*}
and
\begin{gather*}
|d_{\mu_i}\mathcal{J}_h(\mu) - d_{\mu_i}\mathcal{J}_\text{RB}(\mu)| \leq \Delta^{d_{\mu_i}\mathcal{J}_\text{RB}}(\mu),
\end{gather*}
for $i \in \{1, \dots, P\}$, where the right-hand sides are given by
\begin{equation*}
\Delta^{\mathcal{J}_\text{RB}}(\mu) := \mathcal{T}(r_\text{ad}(u_\text{RB}(\mu),p_\text{RB}(\mu),\cdot\,; \mu))\Delta^\text{pr}_\text{RB}(\mu) + \gamma_d\Delta^\text{pr}_\text{RB}(\mu)^2
\end{equation*}
and
\begin{align*}
\Delta^{d_{\mu_i}\mathcal{J}_\text{RB}}(\mu) := & \mathcal{T}(f_{\mu_i}(\cdot\,; \mu)) \Delta^\text{ad}_\text{RB}(\mu) + \gamma_{a_{\mu_i}}^{\text{UB}}(\mu)\Delta^\text{pr}_\text{RB}(\mu) \Delta^\text{ad}_\text{RB}(\mu) \\
& + \gamma_{a_{\mu_i}}^{\text{UB}}(\mu) \Delta^\text{pr}_\text{RB}(\mu)\mathcal{S}(p_\text{RB}(\mu)) + \gamma_{a_{\mu_i}}^{\text{UB}}(\mu) \Delta^\text{ad}_\text{RB}(\mu) \mathcal{S}(u_\text{RB}(\mu)),
\end{align*}
where we define $\Delta^\text{pr}_\text{RB}(\mu) := \Delta^\text{pr}(u_\text{RB}(\mu), \mu)$, $\Delta^\text{ad}_\text{RB}(\mu) := \Delta^\text{ad}(u_\text{RB}(\mu), \mu)$, as well as ${\mathcal{T}(f_{\mu_i}(\cdot\,; \mu)) := \mathcal{T}((f_{\mu_i}(\cdot\,; \mu))_{k \in \mathbb{K}})}$.
\end{theorem}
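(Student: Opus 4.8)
The plan is to prove the two estimates separately: the objective bound by the classical primal--dual (goal-oriented) technique, in which the linear part of the error is absorbed into the adjoint and a telescoping identity removes the temporal coupling terms; and the gradient bound by a direct expansion of the derivative formula \eqref{eq:nabla_J_tilde_op}. Throughout I write $e_\text{pr} = u_h - u_\text{RB}$ and $e_\text{ad} = p_h - p_\text{RB}$ as in Lemma \ref{lem:error_bound_op}, noting that $p(u_\text{RB}; \mu) = p_\text{RB}(\mu)$ so that the bounds $\mathcal{S}(e_\text{pr}) \le \Delta^\text{pr}_\text{RB}(\mu)$ and $\mathcal{S}(e_\text{ad}) \le \Delta^\text{ad}_\text{RB}(\mu)$ are available.

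\textbf{Objective estimate.} First I would subtract the two functionals. Since $d$ and $l^k$ carry no explicit $\mu$-dependence and the regularization and constant terms cancel, inserting $u_h = u_\text{RB} + e_\text{pr}$ and using bilinearity and symmetry of $d$ together with linearity of $l^k$ yields
\begin{equation*}
\mathcal{J}_h(\mu) - \mathcal{J}_\text{RB}(\mu) = \Delta t \sum_{k=1}^K \left[ d(e_\text{pr}^k, e_\text{pr}^k) + 2 d(u_\text{RB}^k, e_\text{pr}^k) + l^k(e_\text{pr}^k) \right].
\end{equation*}
The central idea is to absorb the linear term into the adjoint: testing the definition of $r^k_\text{ad}$ (Definition \ref{def:residual_ops}) with $v = e_\text{pr}^k \in V_h$ at $(u_\text{RB}, p_\text{RB})$ rewrites $2 d(u_\text{RB}^k, e_\text{pr}^k) + l^k(e_\text{pr}^k)$ as $a(e_\text{pr}^k, p_\text{RB}^k; \mu) + \frac{1}{\Delta t}(e_\text{pr}^k, p_\text{RB}^k - p_\text{RB}^{k+1})_{L^2(\Omega)} + r^k_\text{ad}(u_\text{RB}, p_\text{RB}, e_\text{pr}^k; \mu)$.

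\textbf{Main obstacle.} The crux of the argument is showing that the two terms not involving the residual telescope to zero. For this I would use the primal error equation, obtained by subtracting \eqref{eq:primal_ROM_PDE} from the high-fidelity problem: for every $v \in V_h$,
\begin{equation*}
a(e_\text{pr}^k, v; \mu) + \tfrac{1}{\Delta t}(e_\text{pr}^k - e_\text{pr}^{k-1}, v)_{L^2(\Omega)} = r^k_\text{pr}(u_\text{RB}, v; \mu).
\end{equation*}
Choosing $v = p_\text{RB}^k \in V_\text{RB}$ annihilates the right-hand side by Galerkin orthogonality of the reduced primal problem, giving $\Delta t\, a(e_\text{pr}^k, p_\text{RB}^k; \mu) = -(e_\text{pr}^k - e_\text{pr}^{k-1}, p_\text{RB}^k)_{L^2(\Omega)}$. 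Substituting this and summing over $k$, the $L^2$ contributions collapse into a telescoping sum whose only surviving boundary terms are $(e_\text{pr}^0, p_\text{RB}^1)_{L^2(\Omega)}$ and $(e_\text{pr}^K, p_\text{RB}^{K+1})_{L^2(\Omega)}$, both of which vanish because $e_\text{pr}^0 = 0$ and $p_\text{RB}^{K+1} = 0$ by the initial and terminal conditions. One is then left with $\mathcal{J}_h - \mathcal{J}_\text{RB} = \Delta t \sum_k [ d(e_\text{pr}^k, e_\text{pr}^k) + r^k_\text{ad}(u_\text{RB}, p_\text{RB}, e_\text{pr}^k; \mu) ]$. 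Bounding the quadratic term by continuity of $d$ (constant $\gamma_d$) and the residual term by its dual norm, then applying the discrete Cauchy--Schwarz inequality in time and $\mathcal{S}(e_\text{pr}) \le \Delta^\text{pr}_\text{RB}(\mu)$, produces exactly $\Delta^{\mathcal{J}_\text{RB}}(\mu)$.

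\textbf{Gradient estimate.} This part is essentially mechanical. Using \eqref{eq:nabla_J_tilde_op} for both models the $\lambda d_{\mu_i}\mathcal{R}$ terms cancel, and inserting $u_h = u_\text{RB} + e_\text{pr}$, $p_h = p_\text{RB} + e_\text{ad}$ into the linear $f_{\mu_i}$ and bilinear $a_{\mu_i}$ contributions gives
\begin{equation*}
d_{\mu_i}\mathcal{J}_h - d_{\mu_i}\mathcal{J}_\text{RB} = \Delta t \sum_{k=1}^K \left[ b(t^k) f_{\mu_i}(e_\text{ad}^k; \mu) - a_{\mu_i}(e_\text{pr}^k, p_\text{RB}^k; \mu) - a_{\mu_i}(u_\text{RB}^k, e_\text{ad}^k; \mu) - a_{\mu_i}(e_\text{pr}^k, e_\text{ad}^k; \mu) \right].
\end{equation*}
I would bound the four terms one by one: the first by the dual norm of $b(t^k) f_{\mu_i}$ paired with $\|e_\text{ad}^k\|$, and the remaining three by the continuity constant $\gamma_{a_{\mu_i}}^{\text{UB}}(\mu)$. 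Applying the discrete Cauchy--Schwarz inequality in $k$ converts each product-sum into a product of $\mathcal{S}$- and $\mathcal{T}$-quantities, and substituting $\mathcal{S}(e_\text{pr}) \le \Delta^\text{pr}_\text{RB}(\mu)$ and $\mathcal{S}(e_\text{ad}) \le \Delta^\text{ad}_\text{RB}(\mu)$ matches the four summands of $\Delta^{d_{\mu_i}\mathcal{J}_\text{RB}}(\mu)$ one-to-one, the weight $b(t^k)$ being absorbed into the notation $\mathcal{T}(f_{\mu_i}(\cdot\,; \mu))$.
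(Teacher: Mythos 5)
Your proof is correct and takes essentially the same route as the paper, whose own proof simply cites Theorems 9 and 10 of Qian et al.\ and notes that the additional primal-residual term \eqref{eq:last_term_J_RB_error_est} appearing there vanishes because the reduced primal and adjoint problems share the same space. Your key telescoping step --- testing the primal error equation with $p_\text{RB}^k$ and invoking Galerkin orthogonality of the reduced primal problem --- is precisely the mechanism behind that omission, so you have in effect written out the cited argument in full, correctly incorporating the paper's one genuinely new observation.
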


\begin{proof}
The proof follows analogously from \cite[Theorems 9 and 10]{qian2017certified}. Unlike \cite{qian2017certified}, we omit the last summand in the definition of $\Delta^{\mathcal{J}_\text{RB}}(\mu)$:
\begin{equation}
\label{eq:last_term_J_RB_error_est}
\Delta t \left|\sum_{k = 1}^K r^k_\text{pr}(u^k_\text{RB}(\mu),p^k_\text{RB}(\mu);\mu) \right|.
\end{equation}
This term is omitted since we restrict ourselves to a \gls{RB} approach where the reduced primal and adjoint problem share identical vector spaces, ensuring that \eqref{eq:last_term_J_RB_error_est} is always zero.
\end{proof}

\begin{remark}
\label{rem:accurate_RB}
If the high-fidelity solutions $u_h(\mu)$ and $p_h(\mu)$ belong to $V_{\text{RB}}$, then the \gls{RB} approximations solving \eqref{eq:primal_ROM_PDE} and \eqref{eq:dual_ROM_PDE} coincide with them. Consequently, we obtain $\Delta^{\mathcal{J}}(\mu) = 0$ and $\Delta^{\nabla_{\mu}J}(\mu) := \left(\Delta^{d_{\mu_1}\mathcal{J}_\text{RB}}(\mu), \dots, \Delta^{d_{\mu_P}\mathcal{J}_\text{RB}}(\mu) \right) = 0$.
\end{remark}

\subsection{\gls{RB} Space Construction}
We now discuss constructing a suitable basis for $V_{\text{RB}}$. The objective here is to maintain the smallest possible dimension while accurately approximating high-fidelity solutions for all parameter values. The basis of $V_{\text{RB}}$ is expanded only when the approximation error for a queried parameter $\tilde{\mu}$, determined by the aforementioned estimators, exceeds a prescribed tolerance. In such cases, an adaptation procedure updates $V_{\text{RB}}$ and the \gls{RB}-\gls{ROM}. The common approach utilizes high-fidelity trajectories for selected parameters (\textit{snapshots}) to construct the \gls{RB} space, ensuring accurate \gls{RB} solutions (see Remark \ref{rem:accurate_RB}).

Given a potentially uninitialized (i.e., the basis may be empty) reduced space $V_{\text{RB}}$, the adaptation process at $\tilde{\mu} \in \mathcal{P}$ is as follows.

\begin{procedure__}
\label{proc:RB_update}
\hspace{0.4cm}
\begin{enumerate}
\item Solve the \gls{FOM} at $\tilde{\mu}$ to obtain the primal and adjoint trajectories $u_h(\tilde{\mu})$ and $p_h(\tilde{\mu})$. Define
\begin{equation*}
\mathcal{S}^\text{pr} := \left[u_h^1(\tilde{\mu}) - \Pi_{V_\text{RB}} u_h^1(\tilde{\mu})\,\big|\,\dots\,\big|\, u_h^K(\tilde{\mu}) - \Pi_{V_\text{RB}} u_h^K(\tilde{\mu}) \right] \in \mathbb{R}^{N_h \times K},
\end{equation*}
where $\Pi_{V_\text{RB}}$ is the $\langle \cdot, \cdot \rangle_{V_h}$-projection into $V_\text{RB}$. The adjoint trajectory defines $\mathcal{S}^\text{ad}$ analogously.

\item Apply the \gls{HaPOD} \cite{himpe2018hierarchical} separately to $\mathcal{S}^\text{pr}$ and $\mathcal{S}^\text{ad}$, generating sets of $\langle \cdot, \cdot \rangle_{V_h}$-orthogonal vectors $\Phi^\text{pr} := \{ \varphi^\text{pr}_1, \dots, \varphi^\text{pr}_r \}$ and $\Phi^\text{ad} := \{ \varphi^\text{ad}_1, \dots, \varphi^\text{ad}_{\tilde{r}} \}$, satisfying
\begin{equation*}
\sum_{s \in \mathcal{S}^\text{pr}} \|s - \Pi_{\text{span} \{ \varphi^\text{pr}_1, \dots, \varphi^\text{pr}_r \}}s \|^2_{V_h} < \epsilon_\text{pod}, \quad
\sum_{s \in \mathcal{S}^\text{ad}} \|s - \Pi_{\text{span} \{ \varphi^\text{ad}_1, \dots, \varphi^\text{ad}_{\tilde{r}} \}}s \|^2_{V_h} < \epsilon_\text{pod}.
\end{equation*}

\item Extend $V_\text{RB}$ with $\Phi^\text{pr}$ and $\Phi^\text{ad}$, followed by reorthogonalization, defining an updated $V_h$-orthogonal basis, with dimension $\tilde{N}_\text{RB}$.

\item Precompute and store all parameter-independent properties. This includes $(\phi_i, \phi_j)_{L^2(\Omega)}$, $a^{q}(\phi_i, \phi_j)$ and $f^{\tilde{q}}(\phi_i)$, with $q \in Q_a$ and $\tilde{q} \in Q_f$ for all ${i, j \in \{1, \dots, \tilde{N}_\text{RB} \}}$ and for both projected \glspl{PDE} as well as the Riesz-representatives used in the error estimator \eqref{eq:primal_error_est} and \eqref{eq:dual_error_est}. The precomputation 
is carried out as described in \cite{buhr2014numerically}, by projecting the 
operator in the residual to an approximate image basis. However, unlike 
\cite{buhr2014numerically}, we omit orthogonalization for this basis to reduce 
the computational burden of the offline calculations.
\end{enumerate}
\end{procedure__}

The precomputations in Step 4 significantly reduce the online computational cost for parameter inference. This separation of parameter-independent and parameter-dependent computations is known as \textit{offline/online decomposition}. Notably, for any tolerance $\epsilon_\text{POD}$, the sets $\Phi^\text{pr}$ and $\Phi^\text{ad}$ in Step 2 can be chosen to satisfy the required accuracy constraints, ensuring that high-fidelity solutions and their \gls{RB} approximations match at $\tilde{\mu}$ up to machine precision when $\epsilon_\text{POD}$ is appropriately selected.

\section{Machine learning surrogates}
\label{sec:ML_surrogates}

Although \Gls{RB} methods are a powerful technique for reducing computational complexity, solving the \Gls{PDE}-constraint and adjoint problem remains a significant bottleneck, particularly for problems on long time scales. One strategy to enhance the \Gls{RB} approach, as discussed in, e.g., \cite{fresca2022pod, hesthaven2018non, kutyniok2020theoretical}, involves replacing the \Gls{RB}-\Gls{ROM} with machine learning surrogate models which approximate the \Gls{RB} solution operator using a (supervised)  \Gls{ML}  algorithm. \Gls{ML} is a vast and rapidly evolving field, especially over the last decade. Here, we provide a concise and general introduction and refer the reader to the extensive literature for further details, e.g., \cite{GoodBengCour16, lecun2015deep, berner2021modern, kutyniok2022mathematics}.

Let $\mathcal{X}$ and $\mathcal{Y}$ be measurable spaces, and let $\mathcal{M}(\mathcal{X}, \mathcal{Y})$ denote the set of measurable functions mapping from $\mathcal{X}$ to $\mathcal{Y}$. In general terms, supervised machine learning aims to \textit{learn} a function $T : \mathcal{X} \to \mathcal{Y}$ that approximates an (often unknown) target function $\tilde{T} : \mathcal{X} \to \mathcal{Y}$. The learning process relies on a finite training dataset ${\mathcal{M}_\text{train} \subseteq (\mathcal{X} \times \mathcal{Y})^{N_\text{train}}}$, consisting of $N$ input-output pairs
$
\mathcal{M}_\text{train} = \{(x_1, \tilde{T}(x_1)), \ldots, (x_N, \tilde{T}(x_{N_\text{train}}))\}.
$
Based on this data, a machine learning algorithm selects a function from a hypothesis set ${\mathcal{F} \subset \mathcal{M}(\mathcal{X}, \mathcal{Y})}$ of admissible functions. The selected function should best fit the training data while capturing the global behavior of $\tilde{T}$. It is crucial to note that this does not necessarily mean minimizing the error on the training data; rather, the objective is to generalize well to unseen data.

Our goal is to learn the parameter-dependent coefficients of \Gls{RB} solutions. Let $V_\text{RB}$ be as defined above, with $N_\text{RB} = \dim V_\text{RB}$. Define the primal \Gls{RB} solution operator:
\begin{align}
\label{eq:RB_sol_op}
A_\text{RB}^\text{pr}: \mathcal{P} & \rightarrow Q^\text{pr}_{\Delta t}(0,T;V_\text{RB}^\text{pr}); \qquad
\mu  \mapsto u_\text{RB}(\mu) = \left(\sum_{n = 1}^{N_\text{RB}} \underline{u^{k}_\text{RB}(\mu)}_n \psi_{n} \right)_{k \in \{0, \dots, K\}}.
\end{align}
For any $\mu \in \mathcal{P}$, this operator returns the solution to \eqref{eq:primal_ROM_PDE}. Our objective is to learn a function
\begin{equation*}
T: \mathcal{P} \rightarrow \mathbb{R}^{(K+1)N_\text{RB}}
\end{equation*}
that directly maps parameters to approximative \Gls{DoF} vectors for all times and basis vectors, i.e.
\begin{equation}
\label{eq:trained_func}
\underline{u^k_\text{ML}(\mu)}_n := T(\mu)_{kN_\text{RB} + n}, \quad 0 \leq k \leq K, \quad 1 \leq n \leq N_\text{RB}.
\end{equation}

This methodology corresponds to the time-vectorized layout described in \cite{haasdonk2022new}. Similar to the \Gls{RB} approach, we define an \Gls{ML} approximative objective functional. However, a key difference is in computing the gradient. While for the \Gls{RB}-\Gls{ROM} equation \eqref{eq:nabla_J_tilde_op} applies to the reduced solutions of \eqref{eq:primal_ROM_PDE} and \eqref{eq:dual_ROM_PDE}, this is generally not the case for \Gls{ML} approximations.
The reason is that $u_\text{ML}(\mu)$ and $p_\text{ML}(\mu)$ do not exactly solve the reduced equations, which is a prerequisite for using the adjoint methodology \cite[Section 1.6.1]{hinze2009optimization}. We therefore demand $C^1$-regularity for $\mu \mapsto \underline{u^{k}_\text{ML}(\mu)}_n$ and choose an \Gls{ML} model appropriately, computing gradients directly via the chain rule.

\begin{definition}
\label{def:ML_surrogate}
Let $T: \mathcal{P} \rightarrow \mathbb{R}^{(K+1)N_\text{RB}}$ be a learned, continuously differentiable function, and define $\underline{u^k_\text{ML}(\mu)}_n$ as in \eqref{eq:trained_func}. Assume that
\begin{equation*}
\underline{u^0_\text{ML}}_n \equiv 0 \text{ and } d_{\mu_i} \underline{u^0_\text{ML}}_n \equiv 0 \text{ for } 1 \leq n \leq N_\text{RB}.
\end{equation*}
Then, the \textit{ML approximative primal solution} is given by
\begin{equation}
\label{eq:def_u_ML}
u_\text{ML}(\mu) := \left(\sum_{n = 1}^{N_\text{RB}} \underline{u^{k}_\text{ML}(\mu)}_n \psi_{n} \right)_{k \in \{0, \dots, K\}} \in Q^\text{pr}_{\Delta t} (0, T; V_\text{RB}).
\end{equation}
The respective partial derivatives are defined as
\begin{equation}
\label{eq:def_du_ML}
d_{\mu_i} u_\text{ML}(\mu) = \left( \sum_{n = 1}^{N_\text{RB}}
d_{\mu_i} \underline{u^{k}_\text{ML}(\mu)}_n \psi_{n}\right)_{k \in \{0, \dots, K\}} \in Q^\text{pr}_{\Delta t} (0, T; V_\text{RB}).
\end{equation}
The \Gls{ML} approximative objective functional, analogous to Definition \ref{def:J_RB}, is given by
\begin{equation*}
\mathcal{J}_\text{ML}(\mu) := J(u_\text{ML}(\mu);\mu).
\end{equation*}
Using the chain rule, the partial derivatives of $\mathcal{J}_\text{ML}(\mu)$ are
\begin{align*}
d_{\mu_i} \mathcal{J}_\text{ML}(\mu) = \Delta t \sum_{k = 1}^K \left[2d(u^k_\text{ML}(\mu), d_{\mu_i} u^k_\text{ML}(\mu)) + l^k(d_{\mu_i} u^k_\text{ML}(\mu))\right] + \lambda d_{\mu_i}\mathcal{R}(\mu),
\end{align*}
for $i \in \{1, \dots, P\}$, defining the \Gls{ML}-\Gls{ROM}.
\end{definition}

\subsection{Error Estimates for ML Surrogates}
\label{ssec:error_est_ML}

Similarly to the \gls{RB} case, we aim to certify the \gls{ML}-\gls{ROM} by defining a posteriori error estimators that provide upper bounds for the model error between $\mathcal{J}_\text{ML}$, $\nabla_\mu \mathcal{J}_\text{ML}$, and their high-fidelity counterparts. Following Lemma \ref{lem:error_bound_op}, an a posteriori error bound for $u_\text{ML}(\mu)$, as defined in \eqref{eq:def_u_ML}, can be established such that for all $\mu \in \mathcal{P}$, the following holds:
\begin{gather}
\label{eq:u_ML_error_bound}
\mathcal{S}(e(\mu)) \leq \Delta^\text{pr}(u_\text{ML}(\mu), \mu) = \alpha^{-1}_\text{LB}(\mu) \mathcal{T}(r_\text{pr}(u_\text{ML}(\mu),\cdot\,; \mu)),
\end{gather}
where $e(\mu) := u_h(\mu) - u_\text{ML}(\mu)$, and $u_h(\mu) \in Q^\text{pr}_{\Delta t}(0,T; V_h)$ is the primal solution obtained via the \gls{FOM}.

Similarly, an a posteriori error bound can be formulated for $d_{\mu_i} u_\text{ML}(\mu)$ by adapting the proof of Lemma \ref{lem:error_bound_op} in \cite{qian2017certified} and Proposition 4.1 in \cite{grepl2005posteriori}, see Appendix \ref{sec:appendix_proofs}.

\begin{lemma}
\label{lem:error_bound_u_ML}
Let $u_h(\mu) \in Q^\text{pr}_{\Delta t}(0,T; V_h)$ be the high-fidelity solution to \eqref{eq:primal_analytic_PDE}, and let $u_\text{ML}(\mu) \in Q^\text{pr}_{\Delta t}(0,T; V_h)$ be defined as in Definition \ref{def:ML_surrogate}, with derivative $d_{\mu_i}u_\text{ML}(\mu)$. Define $d_{\mu_i}u_h(\mu)$ as the (well-defined) Fr\'{e}chet derivatives of $u_h(\mu)$. For $k \in \mathbb{K}$, set
\begin{align}
\label{eq:residuum_dmu_ML}
\begin{split}
\mathcal{R}^k_\text{ML}(v; \mu) := & d_{\mu_i}r^k_\text{pr}(u^k_\text{ML}, v;\mu) - a(d_{\mu_i}u^k_\text{ML}(\mu), v; \mu) \\
& - \frac{1}{\Delta t }(d_{\mu_i}u_\text{ML}^k(\mu)- d_{\mu_i}u_\text{ML}^{k-1}(\mu), v)_{L^2(\Omega)},
\end{split}
\end{align}
for all $v \in V_h$. Then, for all $i \in \{1, \dots, P\}$, we have
\begin{equation*}
\mathcal{S}(d_{\mu_i}e(\mu)) \leq \Delta^{d_{\mu_i}u}_\text{ML}(\mu) := \alpha^{-1}_\text{LB}(\mu) \left(\mathcal{T}(\mathcal{R}_\text{ML}(\cdot; \mu)) + \gamma_{a_{\mu_i}}(\mu) \Delta^\text{pr}_\text{ML}(\mu)\right),
\end{equation*}
where we define ${d_{\mu_i}e(\mu):= d_{\mu_i}u_h(\mu) - d_{\mu_i}u_\text{ML}(\mu)}$, ${
\Delta^\text{pr}_\text{ML}(\mu) := \Delta^\text{pr}(u_\text{ML}(\mu), \mu)}$ and 
${\mathcal{R}_\text{ML}(\cdot\,; \mu) := \left(\mathcal{R}^k_\text{ML}(\cdot\,; \mu) \right)_{k \in \mathbb{K}} \in \left(V'_h\right)^K}$.
\end{lemma}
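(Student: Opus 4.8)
The plan is to reduce the derivative estimate to a parabolic energy argument of exactly the same structure as the one behind Lemma \ref{lem:error_bound_op}, applied to an evolution equation for $d_{\mu_i}e(\mu)$. First I would obtain the \emph{sensitivity equation} satisfied by the exact derivative $d_{\mu_i}u_h(\mu)$. Since the primal problem \eqref{eq:primal_analytic_PDE} is linear in the state and the data $a, f$ depend smoothly on $\mu$ through the affine decomposition, differentiating \eqref{eq:primal_analytic_PDE} with respect to $\mu_i$ and applying the product rule yields, for all $k \in \mathbb{K}$ and $v \in V_h$,
\begin{equation*}
\frac{1}{\Delta t}(d_{\mu_i}u_h^k(\mu) - d_{\mu_i}u_h^{k-1}(\mu), v)_{L^2(\Omega)} + a(d_{\mu_i}u_h^k(\mu), v;\mu) = b(t^k)f_{\mu_i}(v;\mu) - a_{\mu_i}(u_h^k(\mu), v;\mu).
\end{equation*}
Existence of these Fréchet derivatives is guaranteed as noted in the remark following Problem \ref{prob:t_dis_opt_prob}, and the initial condition $d_{\mu_i}u_h^0 = 0$ holds since $u_h^0 = 0$ independently of $\mu$.

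Next I would identify the residual as a defect. Expanding $d_{\mu_i}r^k_\text{pr}(u^k_\text{ML}, v;\mu) = b(t^k)f_{\mu_i}(v;\mu) - a_{\mu_i}(u^k_\text{ML}(\mu), v;\mu)$ (differentiating only the explicit $\mu$-dependence of $r^k_\text{pr}$ with the first argument frozen), the definition \eqref{eq:residuum_dmu_ML} shows that $\mathcal{R}^k_\text{ML}(v;\mu)$ is exactly the negative of the defect obtained when the ML pair $(u_\text{ML}(\mu), d_{\mu_i}u_\text{ML}(\mu))$ is inserted into the sensitivity equation above. Subtracting this defect relation from the exact sensitivity equation, and using $d_{\mu_i}u_\text{ML}^0 = 0$ from Definition \ref{def:ML_surrogate} so that $d_{\mu_i}e^0 = 0$, produces the error evolution equation
\begin{equation*}
\frac{1}{\Delta t}(d_{\mu_i}e^k(\mu) - d_{\mu_i}e^{k-1}(\mu), v)_{L^2(\Omega)} + a(d_{\mu_i}e^k(\mu), v;\mu) = \mathcal{R}^k_\text{ML}(v;\mu) - a_{\mu_i}(e^k(\mu), v;\mu),
\end{equation*}
where $e(\mu) = u_h(\mu) - u_\text{ML}(\mu)$ is the primal error. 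This is precisely the discrete parabolic problem estimated in Lemma \ref{lem:error_bound_op}, now driven by the combined right-hand side $\mathcal{R}^k_\text{ML}(\cdot;\mu) - a_{\mu_i}(e^k(\mu), \cdot;\mu) \in V_h'$.

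With the error equation in hand I would invoke the energy estimate underlying Lemma \ref{lem:error_bound_op} (Lemma 8 in \cite{qian2017certified}): testing with $v = d_{\mu_i}e^k(\mu)$, using coercivity $\alpha_\text{LB}(\mu)$ on the left together with the telescoping identity in time to absorb the mass term, one obtains $\mathcal{S}(d_{\mu_i}e(\mu)) \leq \alpha^{-1}_\text{LB}(\mu)\,\mathcal{T}(\mathcal{R}_\text{ML}(\cdot;\mu) - a_{\mu_i}(e(\mu), \cdot;\mu))$. The final step splits this dual norm by the triangle inequality in $\mathcal{T}$ and controls the coupling term: from the continuity bound $\|a_{\mu_i}(e^k(\mu), \cdot;\mu)\|_{V_h'} \leq \gamma_{a_{\mu_i}}(\mu)\|e^k(\mu)\|_{V_h}$, summation over $k$ gives $\mathcal{T}(a_{\mu_i}(e(\mu), \cdot;\mu)) \leq \gamma_{a_{\mu_i}}(\mu)\,\mathcal{S}(e(\mu))$, which is bounded by $\gamma_{a_{\mu_i}}(\mu)\,\Delta^\text{pr}_\text{ML}(\mu)$ via the primal estimate \eqref{eq:u_ML_error_bound}. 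Combining the two contributions delivers the claimed bound $\Delta^{d_{\mu_i}u}_\text{ML}(\mu)$.

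The main obstacle is the coupling term $a_{\mu_i}(e^k(\mu), \cdot;\mu)$: unlike the pure primal estimate, the derivative error equation is not self-contained but is forced by the state error $e(\mu)$ itself, so the clean bound hinges on feeding the already-established primal estimate \eqref{eq:u_ML_error_bound} into the dual norm of this coupling term. The energy/telescoping argument is otherwise routine and identical to the primal case, so the care lies in (a) justifying the differentiation that produces the sensitivity equation for $u_h$ and (b) verifying that the defect of the ML pair is exactly $\mathcal{R}^k_\text{ML}$ as defined in \eqref{eq:residuum_dmu_ML}.
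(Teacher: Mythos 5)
Your proposal is correct and follows essentially the same route as the paper's proof: derive the sensitivity equation for $d_{\mu_i}u_h(\mu)$, recognize $\mathcal{R}^k_\text{ML}$ as the defect of the ML pair in that equation, subtract to obtain the error evolution equation driven by $\mathcal{R}^k_\text{ML}(\cdot;\mu) - a_{\mu_i}(e^k(\mu),\cdot;\mu)$, and close with the standard parabolic energy/telescoping estimate together with the primal bound \eqref{eq:u_ML_error_bound} for the coupling term. The only cosmetic difference is that you split the combined forcing via the triangle inequality in $\mathcal{T}$ after applying the energy estimate, whereas the paper carries both contributions through the Young-inequality step explicitly and arrives at the same factored bound $\alpha_\text{LB}^{-1}(\mu)\left(\mathcal{T}(\mathcal{R}_\text{ML}(\cdot;\mu)) + \gamma_{a_{\mu_i}}(\mu)\,\mathcal{S}(e(\mu))\right)$.
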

Using this lemma, an a posteriori error bound  can be formulated for $\mathcal{J}_\text{ML}(\mu)$ and ${\nabla_\mu\mathcal{J}_\text{ML}(\mu) := (d_{\mu_1} \mathcal{J}_\text{ML}(\mu), \dots, d_{\mu_P} \mathcal{J}_\text{ML}(\mu))}$, similar to Theorem \ref{thm:obj_error_op}. The proof is presented in Appendix \ref{sec:appendix_proofs}.
\begin{theorem}
\label{thm:error_est_ML}
Let $\mu \in \mathcal{P}$, and let $\mathcal{J}_h(\mu)$, $d_{\mu_i} \mathcal{J}_h(\mu)$ be defined as in Theorem \ref{thm:obj_error_op}. Then, we have
\begin{gather}
\label{eq:J_ML}
| \mathcal{J}_h(\mu) - \mathcal{J}_\text{ML}(\mu) | \leq \Delta^{\mathcal{J}_\text{ML}}(\mu),
\end{gather}
and
\begin{gather}
\label{eq:nabla_J_ML}
|d_{\mu_i} \mathcal{J}_h(\mu) - d_{\mu_i} \mathcal{J}_\text{ML}(\mu)| \leq \Delta^{d_{\mu_i} \mathcal{J}_\text{ML}}(\mu),
\end{gather}
for all $i \in \{1, \dots, P\}$. The error bounds on the right-hand side are given by
\begin{equation*}
\Delta^{\mathcal{J}_\text{ML}}(\mu) := \left[ 2 \mathcal{S}(u_\text{ML}(\mu)) + \mathcal{S}(g_\text{ref}(\mu)) \right] \gamma_d \Delta^\text{pr}_\text{ML}(\mu) + \gamma_d \Delta^\text{pr}_\text{ML}(\mu)^2,
\end{equation*}
and
\begin{align*}
\Delta^{d_{\mu_i} \mathcal{J}_\text{ML}}(\mu) := & \left[2\Delta^\text{pr}_\text{ML}(\mu) + 2\mathcal{S}(u_\text{ML}(\mu)) + \mathcal{S}(g_\text{ref})\right] \gamma_d \Delta^{d_{\mu_i}u}_\text{ML}(\mu) \\
& + 2 \mathcal{S}(d_{\mu_i}u_\text{ML}(\mu))  \gamma_d\Delta^\text{pr}_\text{ML}(\mu).
\end{align*}
\end{theorem}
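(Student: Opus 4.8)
The plan is to treat the two bounds \eqref{eq:J_ML} and \eqref{eq:nabla_J_ML} separately, in both cases reducing the model error to the primal state error $e(\mu) := u_h(\mu) - u_\text{ML}(\mu)$ (and, for the gradient, to its derivative $d_{\mu_i}e(\mu)$) and then invoking the a posteriori bound \eqref{eq:u_ML_error_bound} and Lemma \ref{lem:error_bound_u_ML}.

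For the objective bound I would first write $\mathcal{J}_h(\mu) - \mathcal{J}_\text{ML}(\mu)$ using the definition of $J$ from Problem \ref{prob:t_dis_opt_prob}. The regularization term $\lambda\mathcal{R}(\mu)$ and the $(g^k_\text{ref}, g^k_\text{ref})_\mathbb{D}$ contributions are state-independent and cancel, leaving $\Delta t\sum_{k}[d(u_h^k, u_h^k) - d(u_\text{ML}^k, u_\text{ML}^k) + l^k(e^k)]$. Exploiting symmetry and bilinearity of $d$ and linearity of $l^k$, the quadratic difference splits as $d(u_h^k,u_h^k)-d(u_\text{ML}^k,u_\text{ML}^k) = d(e^k,e^k) + 2d(u_\text{ML}^k, e^k)$, so the whole expression is affine-plus-quadratic in $e^k$. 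I would then bound each term via the continuity constant $\gamma_d$ (together with the bound relating $l^k$ to $g^k_\text{ref}$), apply the discrete Cauchy–Schwarz inequality in the time index to collect the sums into $\mathcal{S}(u_\text{ML})$, $\mathcal{S}(g_\text{ref})$ and $\mathcal{S}(e)$, and finally substitute $\mathcal{S}(e(\mu)) \le \Delta^\text{pr}_\text{ML}(\mu)$ from \eqref{eq:u_ML_error_bound}. Collecting coefficients reproduces $\Delta^{\mathcal{J}_\text{ML}}(\mu)$, with the quadratic term $d(e^k,e^k)$ producing the $\gamma_d(\Delta^\text{pr}_\text{ML})^2$ contribution.

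For the gradient the essential point — and the reason this does not follow verbatim from Theorem \ref{thm:obj_error_op} — is that $u_\text{ML}$ does not solve the reduced equation, so no adjoint/residual identity is available and one must differentiate $J$ directly. Both $d_{\mu_i}\mathcal{J}_h$ and $d_{\mu_i}\mathcal{J}_\text{ML}$ are therefore taken in the chain-rule form of Definition \ref{def:ML_surrogate}, which is legitimate on the high-fidelity side as well since $u_h(\mu)$ is Fr\'{e}chet differentiable. After cancelling $\lambda d_{\mu_i}\mathcal{R}(\mu)$, the difference is $\Delta t\sum_k[2d(u_h^k, d_{\mu_i}u_h^k) - 2d(u_\text{ML}^k, d_{\mu_i}u_\text{ML}^k) + l^k(d_{\mu_i}e^k)]$. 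Writing $u_h = u_\text{ML}+e$ and $d_{\mu_i}u_h = d_{\mu_i}u_\text{ML} + d_{\mu_i}e$ and using bilinearity, the quadratic difference expands into the three cross terms $2d(u_\text{ML}^k, d_{\mu_i}e^k)$, $2d(e^k, d_{\mu_i}u_\text{ML}^k)$ and $2d(e^k, d_{\mu_i}e^k)$. I would bound these four contributions with $\gamma_d$, apply discrete Cauchy–Schwarz to form the $\mathcal{S}$-operators, and insert both $\mathcal{S}(e)\le\Delta^\text{pr}_\text{ML}$ and $\mathcal{S}(d_{\mu_i}e)\le\Delta^{d_{\mu_i}u}_\text{ML}$ from Lemma \ref{lem:error_bound_u_ML}. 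Grouping all terms carrying the factor $\gamma_d\Delta^{d_{\mu_i}u}_\text{ML}$ gives the coefficient $2\Delta^\text{pr}_\text{ML}+2\mathcal{S}(u_\text{ML})+\mathcal{S}(g_\text{ref})$, while $2d(e^k,d_{\mu_i}u_\text{ML}^k)$ yields the remaining $2\mathcal{S}(d_{\mu_i}u_\text{ML})\gamma_d\Delta^\text{pr}_\text{ML}$ term, reproducing $\Delta^{d_{\mu_i}\mathcal{J}_\text{ML}}(\mu)$.

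The term-by-term continuity and Cauchy–Schwarz estimation is routine; the only genuine obstacle is conceptual rather than computational, namely recognizing that the adjoint-based estimator underlying Theorem \ref{thm:obj_error_op} is unavailable here and must be replaced by the direct chain-rule expansion, which in turn forces reliance on the derivative error bound of Lemma \ref{lem:error_bound_u_ML}. Keeping careful track of which cross terms multiply $\Delta^\text{pr}_\text{ML}$ versus $\Delta^{d_{\mu_i}u}_\text{ML}$ is where bookkeeping slips are most likely, but no deeper difficulty arises.
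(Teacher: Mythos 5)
Your proposal is correct and takes essentially the same route as the paper's own proof: the same expansion of the objective difference into terms affine and quadratic in $e^k(\mu)$, the same three cross terms $2d(e^k,d_{\mu_i}e^k)$, $2d(e^k,d_{\mu_i}u^k_\text{ML})$, $2d(u^k_\text{ML},d_{\mu_i}e^k)$ obtained by adding and subtracting mixed terms, followed by continuity of $d$, discrete Cauchy--Schwarz in time, and insertion of \eqref{eq:u_ML_error_bound} and Lemma \ref{lem:error_bound_u_ML}. You also correctly identify the key conceptual point, namely that the adjoint-based argument of Theorem \ref{thm:obj_error_op} is unavailable because $u_\text{ML}$ does not solve the reduced equation, so the chain-rule form of the derivative must be used on both sides.
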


\subsection{General Considerations}
\label{ssec:ML_general_considerations}

Replacing the computation of the \gls{RB} solutions with their \gls{ML} counterparts offers two main advantages. Firstly, parameter inference using the \gls{ML}-\gls{ROM} is typically significantly faster \cite{haasdonk2022new, wenzel2023application} compared to the \gls{RB}-\gls{ROM}. Secondly, this approach allows for parallel computation since the solution $\underline{u^{k}_\text{ML}(\mu)}_n$ at a given time step does not explicitly depend on previous time steps, enabling simultaneous calculation of all time steps. However, to efficiently use an \gls{ML}-based model for an optimization task, several key aspects regarding the training process and the specific considerations of incorporating an \gls{ML} model into a gradient descent type optimization algorithm must be addressed. In this section, we outline the requirements an \gls{ML} algorithm must satisfy to be considered viable in this context. Due to the lack of a sufficiently developed analytic theory (to the best of our knowledge), our argumentation will be primarily heuristic.

First, we must consider the domain of \gls{ML} models and its impact on training. As previously mentioned, the surrogate function $T$ is selected based on the available training data. A critical consequence of this is that the approximation quality of $T$ generally deteriorates significantly in regions of $\mathcal{P}$ with low training data density. When applying this to an \gls{RB} optimization algorithm, we encounter the challenge that obtaining sufficient training data for a globally accurate \gls{ML} model requires evaluating the \gls{RB} models for a sufficiently large set of training parameters $\mathcal{P}_\text{train}$, distributed \textit{uniformly} in $\mathcal{P}$. However, this approach would lead to an impractically large $\mathcal{P}_\text{train}$ and, consequently, an excessive number of \gls{RB} model evaluations, rendering offline computations infeasibly costly. This issue becomes even more pronounced as the parameter space dimension $P$ increases, a phenomenon commonly referred to as the \textit{Curse of Dimensionality}. Furthermore, we must ensure that the \gls{RB} models themselves remain globally accurate with respect to the high-fidelity models. This further increases offline computational costs, as the reduced basis space $V_\text{RB}$ must be sufficiently enriched to achieve the desired accuracy globally.

To address this challenge, we propose an algorithmic framework incorporating localized \gls{ML} models, following the hierarchical modeling approach outlined in \cite{haasdonk2022new, kleikamp2024application, kleikamp2024adaptivemodelhierarchiesmultiquery}, adapted specifically for optimization tasks. In this framework, a hierarchy of models is established in decreasing order of accuracy and (generally) increasing inference speed. The key idea is to always attempt the fastest model first, only resorting to slower, more accurate models when necessary. Results from more precise models are then leveraged to iteratively enhance the less accurate models. Specifically, our approach first evaluates the \gls{ML}-\gls{ROM}; if its output lacks sufficient accuracy, we proceed to the \gls{RB}-\gls{ROM}. Should this also prove inadequate, the \acrlong{FOM} is employed. The \gls{FOM} solutions are subsequently used to refine $V_\text{RB}$, while the \gls{RB} solutions serve as training data for the \gls{ML}-\gls{ROM}. Unlike the approach in \cite{haasdonk2022new, kleikamp2024adaptivemodelhierarchiesmultiquery}, where re-adaptation is guided by an a posteriori error estimate, our re-training process is triggered by a decay condition of the objective functional, as detailed in Section \ref{sec:TR}. A crucial distinction of our approach is that we do not employ a distinct offline and online phase but instead alternate between model improvement and evaluation. This eliminates the need for an expensive offline phase, such as generating training data for a globally accurate \gls{ML} model. Instead, we start with coarse models and refine them dynamically using intermediate results, adapting them to the domain of interest, which often reduces computational complexity compared to a priori constructed models.

Following this dynamic adaptation strategy, \gls{ML} models are trained only with \gls{RB} solutions collected during the optimization process. Consequently, \gls{ML} models are expected to be sufficiently accurate only near the optimization trajectory. This necessitates careful selection of the learning method and imposes additional requirements to ensure the surrogate model's quality. First, the learning procedure must perform well with relatively few localized training data (I) and must converge rapidly to the learned function (II). The latter is particularly important because, due to localization, \gls{ML} models generally require multiple retraining steps, and excessive training overhead could negate the computational speed advantage. Additionally, we impose an interpolation property requirement (III), ensuring that the learned function satisfies
\begin{equation}
\label{eq:ml_interpol_error}
\max_{i \in \{1, \dots, N_\text{train}\}} \|y_i - T_\eta(x_i) \|_{\mathcal{Y}} \leq \eta,
\end{equation}
where $\|\cdot\|_{\mathcal{Y}}$ is an appropriate norm on $\mathcal{Y}$. This property, combined with the continuity provided by the $C^1$-regularity condition (see Definition \ref{def:ML_surrogate}), ensures approximation accuracy in a small region surrounding the training inputs $x_1, \dots, x_N$. However, it should be noted that, in general, the size of the region where the \gls{ML} surrogates remain sufficiently accurate cannot be estimated reliably.

\section{Trust region multi-fidelity optimization}
\label{sec:TR}
In this section, we present a \acrlong{TR} algorithm that integrates both \Gls{RB}-\glspl{ROM} and \Gls{ML}-\glspl{ROM}. As previously discussed, the \Gls{RB}-\Gls{ROM} serves as the foundation, providing both training data and certification. Following the approach outlined in \cite{qian2017certified}, the global optimization problem \ref{prob:t_dis_opt_prob} is reformulated as a sequence of iteratively solved sub-problems:

\begin{equation} 
\label{eq:TR_subproblem} 
\mu^{(i+1)} := \argmin_{\mu \in \mathcal{P}} \mathcal{J}^{(i)}(\mu) \quad \text{s.t.} \quad \mu \in T^{(i)}, 
\end{equation}
which are solved until a global convergence criterion is met. Here, $\mathcal{J}^{(i)}(\cdot)$ represents a local approximation of $\mathcal{J}(\cdot)$ for each iteration index $i \in \mathbb{I} := {0, \dots, I-1}$, for ${I \in \mathbb{N}_0 \cup \{\infty\}}$. Let $M^{(i)}$ denote models that provide access to local approximations $\mathcal{J}^{(i)}(\cdot)$ and their gradients $\nabla_\mu \mathcal{J}^{(i)}(\cdot)$. These models are updated iteratively to adapt to the current trust region. To ensure global convergence, we assume that for each sub-problem \eqref{eq:TR_subproblem}, a sequence $\lbrace \mu^{(i,l)} \rbrace_{l = 0}^{L^{(i)}}$ can be found beginning with $\mu^{(i,0)} := \mu^{(i)}$, such that

\begin{equation} \label{eq:inner_decay_condition} \mathcal{J}^{(i)}(\mu^{(i, l)}) \geq \mathcal{J}^{(i)}(\mu^{(i, l+1)}) \quad \text{for all} \quad l \in {0, \dots, L^{(i)} - 1}. 
\end{equation}
This ensures a monotonic decrease in the objective function, driving the optimization process towards convergence.
The final iteration point is used as the starting point for the next sub-problem, i.e., $\mu^{(i+1)} := \mu^{(i,L^{(i)})}$. 

 However, condition \eqref{eq:inner_decay_condition} alone does not guarantee a global decrease in the objective function. We therefore require that
\begin{equation}
\label{eq:outer_decay_condition}
\mathcal{J}^{(i)}(\mu^{(i)}) \geq \mathcal{J}^{(i+1)}(\mu^{(i+1)}) \quad \text{for all } i \in \mathbb{I}.
\end{equation}
To ensure \eqref{eq:outer_decay_condition}, we impose the \textit{error-aware sufficient decrease condition}, given by:
\begin{equation}
\label{eq:EASDC}
\mathcal{J}^{(i)}(\mu_\text{AGC}^{(i)}) \geq \mathcal{J}^{(i+1)}(\mu^{(i+1)}),
\end{equation}
in addition to \eqref{eq:inner_decay_condition}, where $\mu_\text{AGC}^{(i)} := \mu^{(i,1)}$ represents the \Gls{AGC}. If \eqref{eq:EASDC} holds, then \eqref{eq:outer_decay_condition} is ensured, and $\mu^{(i+1)}$ is accepted. Otherwise, the guess $\mu^{(i+1)}$ is rejected, and the $i$-th sub-problem is resolved with a shrunken trust region. To verify \eqref{eq:EASDC} without the  expensive construction of $M^{(i+1)}$, we assume that for all $ i \in \mathbb{I}$, error bounds $\Delta^{\mathcal{J},(i)}(\mu^{(i+1)})$ and $\Delta^{\nabla \mathcal{J},(i)}(\mu^{(i+1)})$ are available, satisfying

\begin{equation*}
\left|\mathcal{J}^{(i)}(\mu) - \mathcal{J}(\mu)\right| \leq \Delta^{\mathcal{J},{(i)}}(\mu) \text{  and  } \|\nabla_\mu \mathcal{J}^{(i)}(\mu) - \nabla_\mu \mathcal{J}(\mu)\|_{\mathbb{R}^P} \leq \Delta^{\nabla \mathcal{J},{(i)}}(\mu).
\end{equation*}

With these error estimates, the error-aware sufficient decrease condition can be refined by checking the following sufficient condition:
\begin{equation}
\label{eq:suff_cond}
\mathcal{J}^{(i)}(\mu^{(i+1)}) + \Delta^{\mathcal{J}, (i)}(\mu^{(i+1)}) < \mathcal{J}^{(i)}(\mu_\text{AGC}^{(i)}),
\end{equation}
and the necessary condition:
\begin{equation}
\label{eq:nec_cond}
\mathcal{J}^{(i)}(\mu^{(i+1)}) - \Delta^{\mathcal{J},{(i)}}(\mu^{(i+1)})\leq \mathcal{J}^{(i)}(\mu_\text{AGC}^{(i)}),
\end{equation}
before going to \eqref{eq:EASDC}, cf. \cite{yue2013accelerating}. The optimization process terminates when \(\mu^{(i+1)}\) satisfies a \textit{global first-order termination criterion}:
\begin{equation}
\label{eq:glob_term_crit}
\|\mu^{(i+1)} - P_\mathcal{P}(\mu^{(i+1)} - \nabla_\mu \mathcal{J}(\mu^{(i+1)}))\|_{\mathbb{R}^P} \leq \tau,
\end{equation}
where $\tau > 0$, cf. \cite{Keil_2021}. The operator $P : \mathbb{R}^P \rightarrow \mathcal{P}$ projects a potential overshot back into the admissible domain, i.e.,
\begin{equation*}
P(\mu)_j := 
\begin{cases}
L_j, & \text{if } (\mu)_j \leq L_j, \\ 
(\mu)_j, & \text{if } L_j \leq (\mu)_j \leq U_j, \\
U_j, & \text{if } (\mu)_j \geq U_j.
\end{cases}
\end{equation*}
All these elements together constitute the \Gls{TR} procedure we employ, as sketched in Figure \ref{fig:TR_illustration} and
detailed in Algorithm \ref{algo:TR_opt}.
\begin{figure}
\centering
\includegraphics[width=0.80\textwidth]{./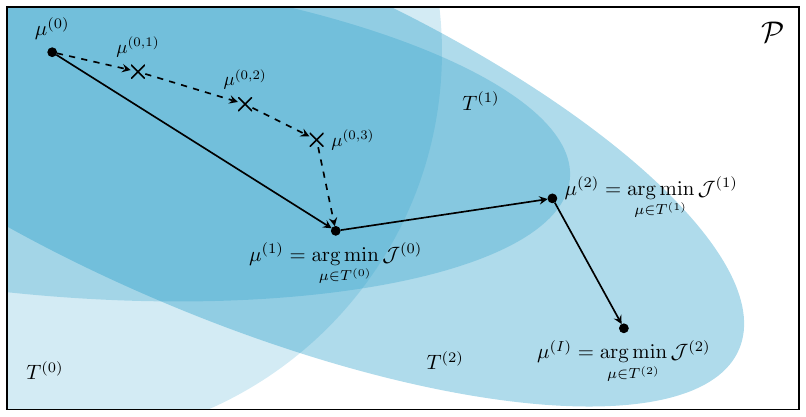}
\caption{\label{fig:TR_illustration} Schematic visualization of the abstract trust region algorithm.}
\end{figure}
\begin{algorithm2e}[h]
\DontPrintSemicolon

\caption{Trust region optimization \cite{yue2013accelerating, qian2017certified}}\label{algo:TR_opt}
Choose $\mu^{(0)} \in \mathcal{P}$, set $i := 0$ and initialize $M^{(0)}$.\;
\While{\eqref{eq:glob_term_crit} is not satisfied}{
Compute $\mu^{(i+1)}$ as solution of \eqref{eq:TR_subproblem} with a convenient termination criteria. \label{line:compute} \;
	\If{the sufficient condition \eqref{eq:suff_cond} holds}{
		Accept $\mu^{(i+1)}$.\;
		Use $\mu^{(i+1)}$ to update the model $M^{(i)}$, getting $M^{(i+1)}$ and set $T^{(i+1)} := T^{(i)}$. \label{line:updata_1}
	}\ElseIf{condition \eqref{eq:nec_cond} fails}{
		Reject $\mu^{(i+1)}$ and shrunk $T^{(i)}$, getting $T^{(i+1)} \subset T^{(i)}$. \label{line:shrink_1}
	}\Else{
		Use $\mu^{(i+1)}$ to update the model $M^{(i)}$, getting $M^{(i+1)}$. \label{line:update_2} \;
		\If{\eqref{eq:EASDC} holds}{
			\text{Accept $\mu^{(i+1)}$ and set $T^{(i+1)} := T^{(i)}$.} \;
		} \Else {
			Reject $\mu^{(i+1)}$ and shrunk $T^{(i)}$, getting $T^{(i+1)} \subset T^{(i)}$. \label{line:shrink_2} \label{line:EASDC_reject} \;
		}
	}
	$i \leftarrow i + 1$\;
}
\end{algorithm2e}

\begin{remark}
Algorithm \ref{algo:TR_opt} does not guarantee termination, as the optimization process may become trapped in an infinite loop, continuously rejecting each candidate $\mu^{(i+1)}$ due to the condition specified in line \ref{line:EASDC_reject}. However, if the algorithm does terminate, it can be shown that, under mild assumptions on $M^{(i)}$, the final iterate $\mu^{(I)}$ is, up to a given tolerance, a first-order critical point of $\mathcal{J}$, cf. \cite{yue2013accelerating}. In the following sections, we introduce suitable models $M^{(i)}$ that satisfy these conditions.
\end{remark}

\subsection{Backtracking Procedure}
We will use the \Gls{FOM} defined in Section \ref{sec:MOR} as the model, providing access to $\mathcal{J}(\mu)$ and its gradient $\nabla_\mu \mathcal{J}(\mu)$, i.e., $\mathcal{J}(\mu) := \mathcal{J}_h(\mu)$ and $\nabla_\mu \mathcal{J}(\mu) := \nabla_\mu \mathcal{J}_h(\mu)$. It remains to discuss the construction of the models $M^{(i)}$ and, based on this, the procedure for computing the sequence $\lbrace \mu^{(i,l)} \rbrace_{l = 0}^{L^{(i)}}$. These aspects are closely interdependent, particularly when employing \Gls{ML}-\Gls{ROM} approaches. As outlined in Section \ref{sec:ML_surrogates}, the \Gls{ML} surrogate requires continuous retraining, and its accuracy is therefore dependent on the parameters evaluated in previous iterations.

The algorithm we propose, is based on the projected \acrshort{BFGS} method for simple bounded domains, as introduced in \cite{kelley1999iterative, Keil_2021}. The minimizing sequence $\lbrace \mu^{(i,l)} \rbrace_{l = 0}^{L^{(i)}}$ is iteratively constructed, starting at $\mu^{(i,0)} := \mu^{(i)}$, using a backtracking procedure. For now, assume that for all $i \in \mathbb{N}_0$ and $\mu \in \mathcal{P}$, the function $\mathcal{J}^{(i)}(\mu)$ and its gradient $\nabla_\mu \mathcal{J}^{(i)}(\mu)$ are accessible. We define the update step as
\begin{equation}
\label{eq:update_backtracking}
\mu^{(i, l)}(k) := P \left(\mu^{(i, l)} + \alpha^{(i)}_0 \kappa_\text{bt}^{k} d^{(i, l)}\right) \text{ for } k \in \mathbb{N}_0,
\end{equation}
where $d^{(i, l)}$ denotes a descent direction at $\mu^{(i, l)}$, $\alpha^{(i)}_0 > 0$ is the initial step size, and ${\kappa_\text{bt} \in (0,1)}$ is a factor controlling the backtracking speed. The next iterate $\mu^{(i, l + 1)}$ is selected as $\mu^{(i, l + 1)} :=  \mu^{(i, l)}(\tilde{k})$, where $\tilde{k} \in \mathbb{N}_0$ is chosen to ensure sufficient decrease in $\mathcal{J}^{(i)}$. This decrease is enforced by verifying an Armijo-type condition:

\begin{gather}
\mathcal{J}^{(i)}(\mu^{(i, l)}) - \mathcal{J}^{(i)}(\mu^{(i, l)}(\tilde{k})) \geq \frac{\alpha_\text{arm}}{\alpha^{(i)}_0 \kappa_\text{bt}^{\tilde{k}}} \|\mu^{(i, l)} - \mu^{(i, l)}(\tilde{k})\|_{\mathbb{R}^P}^2, \label{eq:armijo}
\end{gather}
where $\alpha_\text{arm} = 10^{-6}$. Additionally, it must be ensured that $\mu^{(i, l)}(k)$ remains inside the trust region $T^{(i)}$ and that a minimum step size is satisfied to prevent unreasonably slow convergence. Hence, $\tilde{k}$ is chosen as the smallest natural number (starting from zero) satisfying \eqref{eq:armijo},
\begin{gather}
\|\mu^{(i, l)} - \mu^{(i, l)}(\tilde{k})\|_{\mathbb{R}^P} \geq \epsilon_\text{cutoff} \text{ and } \label{eq:cutoff} \\
\mu^{(i, l)}(\tilde{k}) \in T^{(i)},  \label{eq:TR_cond}
\end{gather}
for a given $\epsilon_\text{cutoff} > 0$. The iteration terminates when $\mu^{(i+1)} = \mu^{(i, L^{(i)})}$ satisfies the \textit{sub-problem termination criterion} for some $\tau_\text{sub} \in (0 , 1)$ and $\tau_\text{sub} \leq \tau$, i.e.
\begin{equation}
\label{eq:subprob_term_crit_I}
\|\mu^{(i,l)} - P_\mathcal{P}(\mu^{(i,l)} - \nabla_\mu \mathcal{J}^{(i)}(\mu^{(i,l)}))\| \leq \tau_\text{sub}.
\end{equation}

The search directions $d^{(i, l)}$ are determined using the algorithm presented in \cite[Section 5.5.3]{kelley1999iterative}, given by
\begin{equation}
\label{eq:set_d}
\tilde{d}^{(i, l)} := 
\begin{cases}
-\tilde{\mathcal{H}}^{(i, l)} \nabla_\mu \mathcal{J}^{(i)}(\mu^{(i, l)}), & \text{if } -\nabla_\mu \mathcal{J}^{(i)}(\mu^{(i, l)})^T \tilde{\mathcal{H}}^{(i, l)} \nabla_\mu \mathcal{J}^{(i)}(\mu^{(i, l)}) < 0, \\
- \nabla_\mu \mathcal{J}^{(i)}(\mu^{(i, l)}), & \text{otherwise.}
\end{cases}
\end{equation}

If $\|\tilde{d}^{(i, l)}\| \neq 0$, we normalize $d^{(i, l)} := \tilde{d}^{(i, l)} / \|\tilde{d}^{(i, l)}\|$; otherwise, we set $d^{(i, l)} = 0$. The matrix $\tilde{\mathcal{H}}^{(i, l)}$ is an iteratively constructed approximation of the inverse Hessian $(\mathcal{H}_{\mathcal{J}}(\mu^{(i,l)}))^{-1}$, leveraging previously computed gradients $\nabla_\mu \mathcal{J}^{(i)}$, where the initialization is given by $\tilde{\mathcal{H}}^{(i, 0)} := \text{Id}_{\mathbb{R}^P}$.

It is important to emphasize that the sequence $\lbrace \mu^{(i,l)} \rbrace_{l = 0}^{L^{(i)}}$ defined by equations \eqref{eq:armijo}--\eqref{eq:subprob_term_crit_I} is not necessarily well-defined. Specifically, for all $i \in \mathbb{N}_0$, the existence of a $\tilde{k}$ satisfying \eqref{eq:armijo}--\eqref{eq:TR_cond} cannot be guaranteed. Moreover, it cannot be assumed that $\mathcal{J}^{(i)}$ possesses a first-order critical point within $T^{(i)}$, nor that any iterate $\mu^{(i,l)}$ satisfies the termination criterion \eqref{eq:subprob_term_crit_I}. Therefore, it is essential for the final algorithm to incorporate mechanisms that handle these exceptions while ensuring that the sequence adheres to the decay condition \eqref{eq:inner_decay_condition}.

\subsection{\Gls{RB}-\Gls{ML}-\Gls{ROM}-optimization}

Analogous to \cite{qian2017certified, Keil_2021}, the \Gls{RB} space remains fixed for each sub-problem \eqref{eq:TR_subproblem} and is updated in the outer loop (Algorithm \ref{algo:TR_opt}; lines \ref{line:updata_1} and \ref{line:update_2}) using Procedure \ref{proc:RB_update}. This update is applied at $\mu^{(i+1)}$ by obtaining high-fidelity trajectories for $\mu^{(i+1)}$ and selecting relevant modes via the \Gls{HaPOD}-Greedy scheme. Given a fixed reduced space, we define the $i$-th \Gls{RB} surrogate model $M^{(i)}_\text{RB}$ as follows.
\begin{definition}
\label{def:RB_surrogate}
Let $i \in \mathbb{I}$ and $V^{(i)}_\text{RB}$ be given. The \Gls{RB} surrogate model $M^{(i)}_\text{RB}$ is characterized by the following components:

\begin{enumerate}
\item[(I)] \textbf{Evaluation of the Objective:}
\begin{equation*}
\mathcal{J}^{(i)}_\text{RB}(\mu), \nabla_\mu \mathcal{J}^{(i)}_\text{RB}(\mu) \leftarrow M^{(i)}_\text{RB}\texttt{.eval\_output}[\mu]
\end{equation*}
This method computes the \Gls{RB} approximate cost functional and its gradient (as per Definition \ref{def:J_RB}) by solving \eqref{eq:primal_ROM_PDE} and \eqref{eq:dual_ROM_PDE} in $V^{(i)}_\text{RB}$.

\item[(II)] \textbf{Error Estimation:}
\begin{equation*}
\Delta^{\mathcal{J}^{(i)}_\text{RB}}(\mu), \Delta^{\nabla_{\mu}\mathcal{J}^{(i)}_\text{RB}}(\mu) \leftarrow M^{(i)}_\text{RB}\texttt{.est\_output}[\mu]
\end{equation*}
This method provides error estimates associated with $M^{(i)}_\text{RB}\texttt{.eval\_output}[\mu]$, as defined in Theorem \ref{thm:obj_error_op}.

\item[(III)] \textbf{Model Extension:}
\begin{equation*}
M^{(i+1)}_\text{RB} \leftarrow M^{(i)}_\text{RB}\texttt{.extend}[\mu]
\end{equation*}
A new surrogate model $M^{(i+1)}_\text{RB}$ is constructed by extending the \Gls{RB} space $V^{(i)}_\text{RB}$ according to Procedure \ref{proc:RB_update}. That is, $M^{(i+1)}_\text{RB}$ is analogous to $M^{(i)}_\text{RB}$ but utilizes an expanded space $V^{(i+1)}_\text{RB} \supset V^{(i)}_\text{RB}$.
\end{enumerate}

Additionally, we define the training dataset:

\begin{enumerate}
\item[(IV)] \textbf{Training Data Collection:}
\begin{equation*}
\mathcal{M}_\text{train} := ((\mu_1, \underline{u_\text{RB}(\mu_1)}), \dots, (\mu_{N_\text{train}},\underline{u_\text{RB}(\mu_{N_\text{train}})}))
\end{equation*}
This set stores the \Gls{DoF} vectors from the last $N_\text{train}$ computed solutions, where: $\underline{u_\text{RB}(\mu)} := (\underline{u_\text{RB}^k(\mu)})_{k \in \{0, \dots, K\}}$.
\end{enumerate}
\end{definition}

By initializing $V^{(0)}_\text{RB} := \langle \emptyset \rangle$, we explicitly define the sequence of \Gls{RB} surrogate models $(M^{(i)}_\text{RB})_{i \in \mathbb{I}}$. Each sub-problem is uniquely associated with a surrogate model $M^{(i)}_\text{RB}$, which is iteratively refined using high-fidelity solutions obtained from previous sub-problems. Notably, the \Gls{FOM} for these solutions is evaluated only in the outer loop (Algorithm \ref{algo:TR_opt}).

In contrast, a corresponding \Gls{ML}-based surrogate model requires frequent adaptation during solving the sub-problems, as discussed in Section \ref{sec:ML_surrogates}. We now define the $m$-th \Gls{ML} surrogate model for the $i$-th sub-problem:

\begin{definition}
Let $i \in \mathbb{I}$, $m \in \mathbb{N}$, and let $M^{(i)}_\text{RB}$ be given as defined in Definition \ref{def:RB_surrogate}, with $N_\text{RB}^{(i)} := \dim  V^{(i)}_\text{RB}$. The \Gls{ML} surrogate model $M^{(i, m)}_\text{ML}$ is defined as follows:

\begin{enumerate}
\item[(I)] \textbf{Learned Operator:}
\begin{equation*}
T^{(i, m)} : \mathcal{P} \rightarrow \mathbb{R}^{(K+1)N_\text{RB}^{(i)}}
\end{equation*}
This operator maps $\mu$ to $(\underline{u_\text{ML}^k(\mu)})_{k \in \{0, \dots, K\}}$, satisfying the conditions stated in Definition \ref{def:ML_surrogate}.

\item[(II)] \textbf{Evaluation of the Objective:}
\begin{equation*}
\mathcal{J}^{(i,m)}_\text{ML}(\mu), \nabla_\mu \mathcal{J}^{(i,m)}_\text{ML}(\mu) \leftarrow M^{(i,m)}_\text{ML}\texttt{.eval\_output}[\mu]
\end{equation*}
This method computes the \Gls{ML} approximation of the objective functional and gradient (as per Definition \ref{def:ML_surrogate}) by invoking $T^{(i, m)}$.

\item[(III)] \textbf{Model Training:}
\begin{equation*}
M^{(i, m+1)}_\text{ML} \leftarrow M^{(i, m)}_\text{ML}\texttt{.train}
\end{equation*}
This method retrains $T^{(i, m)}$ using a \Gls{ML} algorithm on the dataset $\mathcal{M}_\text{train}$ collected by $M^{(i)}_\text{RB}$, yielding an updated model $T^{(i, m+1)}$. If $\mathcal{M}_\text{train}$ remains unchanged since the last training call, this training is skipped to avoid unnecessary computational overhead.
\end{enumerate}
\end{definition}

These models are initialized by setting $T^{(i, 0)}$ as the zero operator. Although error estimators for the \Gls{ML}-\Gls{ROM} were derived in Section \ref{ssec:error_est_ML}, they are not used to define $M^{(i,m)}_\text{ML}$. The primary reason is that, despite their theoretical value, these estimators are computationally inefficient due to the high inference speed of the \Gls{ML} models. Their use would significantly increase computational costs, potentially negating the speed advantage of the \Gls{ML}-\Gls{ROM}. Consequently, the models $M^{(i, m)}_\text{ML}$ remain \emph{uncertified}, posing a challenge for direct application in a \Gls{TR} algorithm, as done in \cite{qian2017certified, Keil_2021}. In those approaches, the trust region is given as

\begin{equation}
\label{eq:orig_TR}
\tilde{T}^{(i)} := \left\{ \mu \in \mathcal{P} \,\,\Bigg| \Delta^{\mathcal{J}^{(i)}_\text{RB}}_r(\mu) \,\,  \leq \epsilon^{(i)}_L \right\},
\end{equation}
for some $\epsilon_L^{(i)} > 0$, where $\Delta^{\mathcal{J}^{(i)}_\text{RB}}_r(\mu) := | \Delta^{\mathcal{J}^{(i)}_\text{RB}}(\mu) / \mathcal{J}_\text{RB}^{(i)}(\mu)|$. This formulation is not well suited for the machine learning surrogate. To address this limitation, we extend the trust region by introducing a \textit{relaxation range} of width $\kappa^{(i)} := \alpha^{(i)}_0 l_\text{check}$, where $l_\text{check} \in \mathbb{N}$, leading to the modified trust region

\begin{equation*}
T^{(i)} := \left(\tilde{T}^{(i)} \bigcup_{\mu \in \partial \tilde{T}^{(i)}} \left\{\tilde{\mu} \in  \mathbb{R}^P \mid \|\tilde{\mu} - \mu \|_{\mathbb{R}^P} \leq \kappa^{(i)} \right\}\right) \cap \mathcal{P}.
\end{equation*}
This construction eliminates the need to verify condition \eqref{eq:TR_cond} at each backtracking step. If $\Delta^{\mathcal{J}^{(i)}_\text{RB}}_r(\mu^{(i,n)}) \leq \epsilon_L^{(i)}$ holds for some $n \in \mathbb{N}_0$, then
\begin{equation*}
\mu^{(i, l)}(k) \in T^{(i)} \text{ for all } k \in \mathbb{N}_0 \text{ and } l \in \left\{n, \dots, n + l_\text{check}\right\}.
\end{equation*}
Thus, to ensure that $\mu^{(i, l)}$ remains inside the trust region for all $l \in  \{0, \dots, L^{(i)}\}$, it suffices to check $\Delta^{\mathcal{J}^{(i)}_\text{RB}}_r(\mu^{(i, l)}) \leq \epsilon_L^{(i)}$ every $l_\text{check}$-th iteration in the inner loop. Thus, if $l \mod l_\text{check} = 0$ hold, we call $l$ a \textit{checkpoint}. In addition to $\epsilon_L^{(i)}$, we define $\alpha_L^{(i)} > 0$, and demand $\alpha^{(i)}_0 \leq \alpha_L^{(i)}$ for all $i \in \mathbb{I}$. If $\mu^{(i+1)}$ is not accepted, $T^{(i)}$ is shrunk by setting $\epsilon_L^{(i+1)} := \beta_1 \epsilon_L^{(i)}$ for $\beta_1 \in (0,1)$ and $\alpha_L^{(i+1)} := \beta_2 \alpha_L^{(i)}$ for $\beta_2 \in (0,1)$. Otherwise, the trust region remains unchanged, i.e., $\epsilon_L^{(i+1)} := \epsilon_L^{(i)}$ and $\alpha_L^{(i+1)} := \alpha_L^{(i)}$. 

Notably, alternative algorithms have already been proposed, cf. \cite{keil2024relaxed}, that incorporate the idea of relaxing the trust region to allow larger errors in the inner loop, providing greater flexibility in model selection.

\begin{remark}
\label{rem:ML_more_accurate}
Beyond avoiding the computational cost of error estimation for the \Gls{ML} model, redefining the trust region has another significant advantage. Specifically, defining the trust region via $\Delta^{\mathcal{J}_\text{ML}}$ in a manner similar to \eqref{eq:orig_TR} could lead to unnecessary expansions of the reduced basis, resulting in computational overhead \cite{banholzer2020adaptiveprojectednewtonnonconforming, Keil_2021, keil2024relaxed}. 

Consider a scenario where the relative error of $\mathcal{J}_\text{ML}^{(i)}(\mu)$ exceeds the tolerance $\epsilon^{(i)}$, while the error for $\mathcal{J}_\text{RB}^{(i)}(\mu)$ remains below this threshold. If a trust region like $\tilde{T}^{(i)}$ were used, it would unnecessarily trigger a fallback to the main loop, leading to costly reconstruction of the reduced basis although the \Gls{RB} model itself remains adequate. Given the iterative retraining and volatile nature of $M^{(i,m)}_\text{ML}$, such cases are frequent but can typically be resolved by retraining $M^{(i,m)}_\text{ML}$, as long as the underlying \Gls{RB} space $V^{(i)}_\text{RB}$ remains sufficient.
\end{remark}

\begin{figure}[!b]
\centering
\includegraphics[width=0.95\textwidth]{./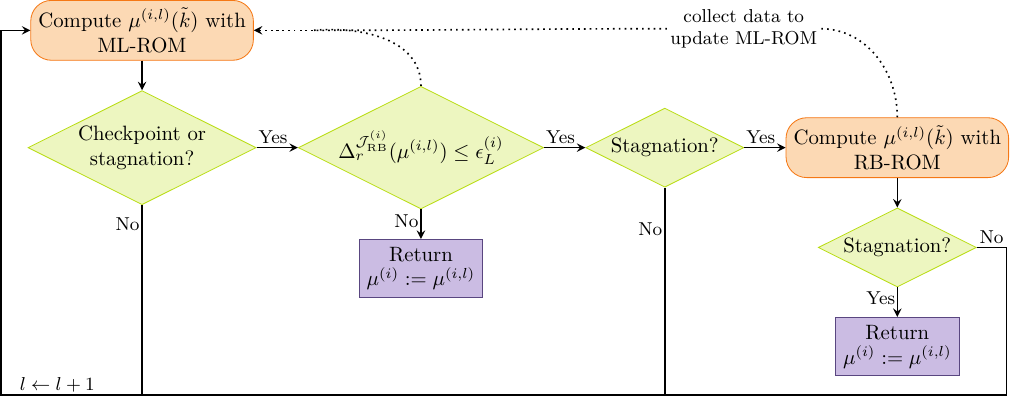}
\caption{\label{fig:flow_chart} Simplified flowchart illustrating the inner loop of the relaxed \Gls{TR}-\Gls{RB}-\Gls{ML} algorithm, as described in Algorithm \ref{algo:TR_opt}, after the warm-up phase.}
\end{figure}

With these considerations, we can now define the \Gls{TR}-\Gls{RB}-\Gls{ML} algorithm. Our goal is to build upon the approaches in \cite{qian2017certified, Keil_2021}, using $M^{(i)}_\text{RB}$ and $M^{(i, m)}_\text{ML}$ to compute $\mathcal{J}^{(i)}(\mu)$ and $\nabla_\mu \mathcal{J}^{(i)}(\mu)$, depending on the state of the \acrshort{BFGS} optimization procedure. We aim to rely on $M^{(i, m)}_\text{ML}$ as much as possible while using $M^{(i)}_\text{RB}$ only when necessary. Importantly, \Gls{ML} models will not be employed in the outer loop (Algorithm \ref{algo:TR_opt}). All surrogate evaluations for conditions \eqref{eq:EASDC}, \eqref{eq:suff_cond}, and \eqref{eq:nec_cond} will be handled by $M^{(i)}_\text{RB}$. As discussed in Remark \ref{rem:ML_more_accurate}, this is crucial for determining whether a re-adaptation of $M^{(i)}_\text{RB}$ is required. Therefore, the outer loop remains as defined in Algorithm \ref{algo:TR_opt}, while we modify how subproblems are solved in the inner loop (line \ref{line:compute} in Algorithm \ref{algo:TR_opt}) to return $\mu^{(i)}$.

\begin{algorithm2e}[!b]
\DontPrintSemicolon
\caption{Inner loop}\label{algo:inner_loop}

Let $\mu^{(i)}$ and the local surrogate models $M^{(i)}_\text{RB}$ and $M^{(i, 0)}_\text{ML}$ be given.\;
Set $l := 0$, $\mu^{(i,0)} := \mu^{(i)}$, $\epsilon^{(i)} > 0$, $\tilde{\mathcal{H}}^{(i, 0)} := \text{Id}_{\mathbb{R}^P}$, $d^{(i,0)} := -\nabla_{\mu} \mathcal J^{(i)}(\mu^{(i,0)})$, $\texttt{no\_progress} := \texttt{false}$ and $\texttt{use\_ML} := \texttt{true}$.\;
$\mathcal J^{(i)}(\mu^{(i,0)}), \nabla_\mu \mathcal J^{(i)}(\mu^{(i,0)}) \leftarrow M^{(i)}_\text{RB}\texttt{.eval\_output}[\mu^{(i,0)}]$\;
\While{\eqref{eq:subprob_term_crit_I} is not satisfied or $l = 0$}{
	\If{$l\,\mathrm{mod}\,l_\text{check} = 0$ or \texttt{no\_progress}}{
		\lIf{$\Delta^{\mathcal{J}^{(i)}_\text{RB}}_r(\mu^{(i,l)}) > \epsilon_L^{(i)}$}{go to line \ref{line:return}.\label{line:tr_check}}
	}			
	Set $k := 0$ and $\texttt{no\_progress} \leftarrow \texttt{false}$.\;
	\While{\eqref{eq:armijo} is not satisfied}{
		Get $\mu^{(i, l)}(k)$ by \eqref{eq:update_backtracking}. \; 											
		\If{$l \leq l_\text{warmup}$ or not $\texttt{use\_ML}$}{
			$\mathcal J^{(i)}(\mu^{(i,l)}(k)), \nabla_\mu \mathcal J^{(i)}(\mu^{(i,l)}(k)) \leftarrow M^{(i)}_\text{RB}\texttt{.eval\_output}[\mu^{(i,l)}(k)]$
		}\Else{
			$\mathcal J^{(i)}(\mu^{(i,l)}(k)), \nabla_\mu \mathcal J^{(i)}(\mu^{(i,l)}(k)) \leftarrow M^{(i,m)}_\text{ML}\texttt{.eval\_output}[\mu^{(i,l)}(k)]$
		}
		\If{not \eqref{eq:cutoff} and $l > 0$}{
			$\texttt{no\_progress} \leftarrow \texttt{true}$\;
			Go to line \ref{line:begin_ifelse}.\;
		}
		$k \leftarrow k + 1$
	}
	\If{\texttt{no\_progress} and not \texttt{use\_ML}}{\label{line:begin_ifelse}
		Go to line \ref{line:return}.
	} \ElseIf{\texttt{no\_progress}} {
		$\texttt{use\_ML} \leftarrow \texttt{false}$\;
		$d^{(i,l)} \leftarrow -\nabla_\mu \mathcal{J}^{(i)}_\text{RB}(\mu^{(i,l)})$ and $\tilde{\mathcal{H}}^{(i, l)} \leftarrow \text{Id}_{\mathbb{R}^P}$.\;
	} \Else {
		$\texttt{use\_ML} \leftarrow \texttt{true}$ and $\texttt{no\_progress} \leftarrow \texttt{false}$\;
		$\mu^{(i,l+1)} := \mu^{(i,l)}(k)$\;
		Get $\mathcal{H}^{(i, l+1)}$ and $d^{(i, l+1)}$ by \eqref{eq:set_d}.\;
		$M^{(i, m+1)}_\text{ML} \leftarrow M^{(i, m)}_\text{ML}\texttt{.train}$\;
		$l \leftarrow l + 1$ and $m \leftarrow m + 1$\;
	}
}
\Return{$\mu^{(i+1)} := \mu^{(i,l)}$} \label{line:return}
\end{algorithm2e}

During the first $l_\text{warmup} \in \mathbb{N}$ iterations of this inner loop, $M^{(i)}_\text{RB}$ is utilized to compute objective functional and gradient. This warm-up phase is necessary for accumulating a sufficient training dataset $\mathcal{M}^\text{train}$, which is essential for initializing valid \Gls{ML} models. Moreover, in the first iteration (beginning at $\mu^{(i)}$), the cutoff condition \eqref{eq:cutoff} is not enforced. As highlighted in \cite{yue2013accelerating}, this ensures that a valid \Gls{AGC} $\mu^{(i,1)}$ will be found by the backtracking procedure, which is crucial for preventing stagnation of Algorithm \ref{algo:TR_opt} away from an optimum. In the remaining iterations, $M^{(i, m)}_\text{ML}$, trained on data previously collected by the \Gls{RB} model, will be used.
In the case that for $\mu^{(i, l)}$ no sufficient $\mu^{(i, l)}(k)$ can be found, before reaching the minimum step size, i.e. \eqref{eq:cutoff} can not be not satisfied, the backtracking is stopped and it is checked that the last accepted parameter is still in the relaxed trust region, i.e. $\Delta^{\mathcal{J}^{(i)}_\text{RB}}_r(\mu^{(i,l)}) \leq \epsilon_L^{(i)}$. If this check fails, $\mu^{(i, l)}$ is returned as $\mu^{(i+1)}$ to the main loop. If the check succeeds, $\tilde{\mathcal{H}}^{(i,l)}$ is reset to $\text{Id}_{\mathbb{R}^P}$ and $d^{(i,l)}$ is updated to $-\nabla_\mu \mathcal{J}^{(i)}_\text{RB}(\mu^{(i,l)})$. The backtracking is then rerun, this time using the RB model $M^{(i)}_\text{RB}$, for all calculations. Note that, in general, the $M^{(i)}_\text{RB}$ is more accurate than $M^{(i, m)}_\text{ML}$, because the \Gls{ML} model depends on \Gls{RB} data, which introduces an additional model error. If this second attempt succeeds, the model $M^{(i, m)}_\text{ML}$ is retrained on the updated training sets that are derived by leveraging the RB model. Subsequent \acrshort{BFGS} iterations proceed as before, utilizing the updated ML model $M^{(i, m+1)}_\text{ML}$. However, if again no $\mu^{(i, l)}(k)$ can be found satisfying \eqref{eq:armijo} \-- \eqref{eq:TR_cond}, $\mu^{(i, l)}$ is also returned as $\mu^{(i+1)}$ and the \Gls{RB} space re-adapted.

As previously mentioned, we can ensure condition \eqref{eq:TR_cond} by periodically verifying that the relative error of the \Gls{RB} model remains below $\epsilon^{(i)}$. This approach circumvents the necessity to check \eqref{eq:TR_cond} for each $k \in \mathbb{N}_0$ and $l \in \{0, \dots, L^{(i)} \}$. Consequently, the relative error is evaluated every $l_\text{check}$-th iteration. If the error surpasses this threshold, the parameter $\mu^{(i, l)}$ is returned as $\mu^{(i+1)}$ to the main loop. The complete process of the inner loop computation is detailed in Algorithm \ref{algo:inner_loop}. Combined with the outer loop procedure this defines the \Gls{TR}-\Gls{RB}-\Gls{ML} algorithm. Figure \ref{fig:flow_chart} shows a simplified flow chart, illustrating the key steps of the inner loop.

\section{Numerical experiments}
\label{sec:num_exps}
In this section, we apply the proposed algorithm to an example problem aimed at optimizing parameters for managing temperature distribution in a building. We compare its performance with established optimization algorithms to assess its effectiveness. Our objective is to evaluate the strengths and limitations of the proposed method and identify areas where further improvements appear necessary.

\sidecaptionvpos{figure}{m}
\renewcommand{\sidecaptionrelwidth}{0.35}
\begin{SCfigure}[][b]
\centering
\includegraphics[width=0.60\textwidth]{./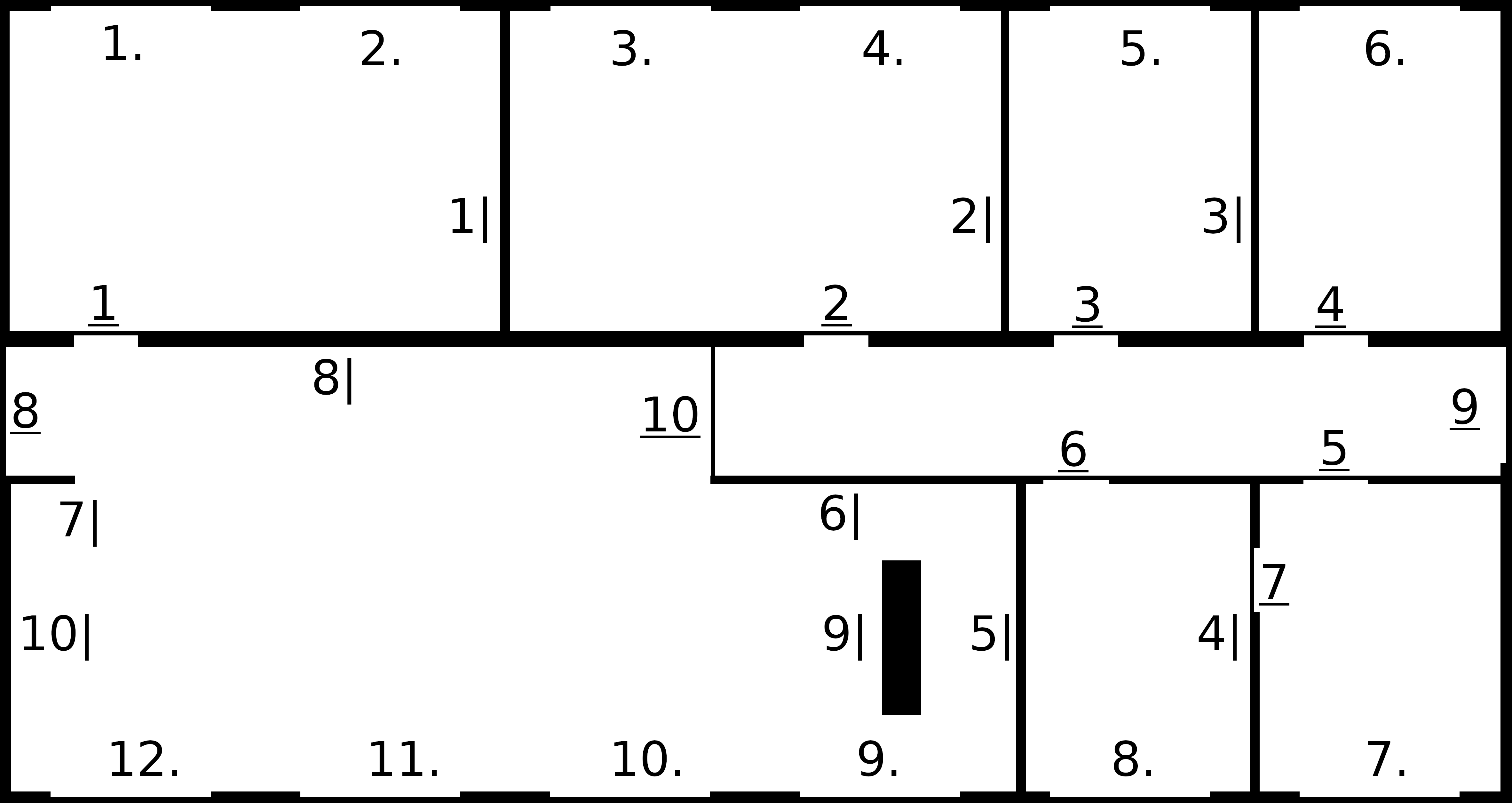}
\caption{\label{fig:building} From \cite{Keil_2021}: On the floor with domain $\Omega := (0,2) \times (0,1)$ heaters below the windows $i.$, doors $\underline{i}$ and walls $i|$ are places. Each has its own parameter $\mu_\ast$ describing the power of the heaters or respectively the heat conductivity of the components. Everywhere else the conductivity is fixed as $\mu_\text{air} = 0.5$.}
\end{SCfigure}

\subsection{Problem}
\label{ssec:num_exp_setup}
As a test case, we seek to determine the optimal heat conductivity values for structural elements (doors and walls) in a building floor $\Omega := (0,2) \times (0,1)$, as shown in Figure~\ref{fig:building}. The goal is to minimize the quadratic discrepancy between the actual temperature distribution $u(\mu)$ and a desired reference distribution $g_\text{ref}$. The temperature is governed by the instationary heat equation, where
\begin{equation*}
a(u,v; \mu) :=  \int_{\Omega} \kappa(\mu) \nabla u(x) \nabla v(x) \, dx, \quad
f(v; \mu) :=  \int_{\Omega} \eta(\mu) v(x) \, dx
\end{equation*}
for all $v \in V$. We impose homogeneous Dirichlet boundary conditions on $\partial \Omega$ and set the initial temperature distribution to $u^0(\mu) \equiv 0$. The parameters $\kappa(\mu)$ and $\eta(\mu)$ model the building components as linear combinations of indicator functions, specifying the locations and corresponding material properties. The diffusion coefficient $\kappa(\mu)$ governs the heat conductivity of walls and doors, while $\eta(\mu)$ in the source term represents the heating capacity, scaled by the function $b(t) := \min \{2t, 1\}$, which accounts for the time required for heaters to reach full power.

The outer walls and doors $(\mu_{10|}, \mu_{\underline{8}},\mu_{\underline{9}})$ have a prescribed heat conductivity of $5.0 \cdot 10^{-3}$, while the heating capacity of all heaters is fixed at $80$. The remaining walls and doors are assigned common conductivity values, denoted by $\mu_\text{walls}$ and $\mu_\text{doors}$, respectively. Our objective is to determine optimal values for these parameters within the range $[0.01, 0.1]$, i.e., $\mathcal{P} := [0.01, 0.1] \times [0.01, 0.1] \subset \mathbb{R}^2$. We define the function space $V := H^1_0(\Omega)$ and the inclusion operator $F: H^1_0(\Omega) \to L^2(\Omega)$, where $L^2(\Omega)$ is equipped with the inner product $\frac{10^4}{2} (\cdot, \cdot)_{L^2(\Omega)}$. A triangular spatial grid with $10,151$ nodes ($\dim V_h = 10,151$) is used, along with a temporal grid consisting of $10,001$ points ($K = 10,000$) with uniform time steps $\Delta t = 1/10$, forming the \acrlong{FOM}. The reference distribution $g_\text{ref} \in Q^\text{pr}_{\Delta t}(0,T; V_h)$ is obtained as the primal solution of the \acrshort{FOM} for $\hat{\mu} := (0.05, 0.05)$. Regularization is imposed through a quadratic penalty on deviations from $\hat{\mu}$, with a weight of $\lambda = 0.5 \cdot 10^{-3}$. Consequently, the objective functional is given by

\begin{equation*}
\mathcal{J}_h(\mu) = \frac{10^3}{2} \sum_{k = 1}^K \|u^k_h(\mu)  - g^k_\text{ref}\|^2_{L^2(\Omega)} + \frac{10^{-3}}{2}  \sum_{i = 1}^{P} \|\mu_i - \hat{\mu}_i\|^2_{\mathbb{R}^P}.
\end{equation*}
Since this functional is convex, it guarantees the existence of a unique minimizer at $\hat{\mu}$.

\subsection{Kernel methods}
\label{ssec:kernel_methods}

As outlined in Section \ref{sec:ML_surrogates}, \gls{ML} methods must satisfy certain criteria to be viable for optimization tasks. Selecting the optimal method involves balancing various properties. For instance, a \Gls{DNN} is generally unsuitable due to its over-parameterization, high data requirements, and lack of interpolation properties. Consequently, we employ a kernel-based model for our numerical experiments. Kernel methods are a mathematically well-studied field and widely applied in varying contexts including numerical mathematics, machine learning and many other areas. For a broader introduction, we refer to \cite{rasmussen2006gaussian, schaback2006kernel, santin2019kernel}.

The surrogate function $T : \mathcal{P} \rightarrow \mathbb{R}^q$ \eqref{eq:trained_func} is constructed as a vector-valued linear combination of kernel functions centered at distinct training inputs $\mu_1, \dots, \mu_{N_\text{train}}$ with corresponding coefficients $\alpha_1, \dots, \alpha_{N_\text{train}} \in \mathbb{R}^q$:
\begin{equation}
\label{eq:kernel_Dof}
T(\cdot) := \sum_{i = 1}^{N_\text{train}} \alpha_i K(\cdot, \mu_i) \in \text{span} \left\{ vK(\cdot, \mu_i)\,|\, i \in \{1, \dots, N_\text{train}\},\, v \in \mathbb{R}^q \right\} =: \mathcal{F}.
\end{equation}
Here, $q = (K+1)N^{(i)}_\text{RB}$, where $N^{(i)}_\text{RB}$ denotes the dimension of the \Gls{RB} space on which the \Gls{ML}-\Gls{ROM} is defined. The kernel function $K : \mathcal{X} \times \mathcal{P} \rightarrow \mathbb{R}^{q \times q}$ is symmetric, i.e., $K(\mu, \tilde{\mu}) = K(\tilde{\mu}, \mu)^T$ for all $\mu, \tilde{\mu} \in \mathcal{P}$. Given training data $\{(\mu_1, y_1), \dots, (\mu_N, y_{N_\text{train}})\}$, the goal is to determine optimal coefficients $\alpha_1, \dots, \alpha_{N_\text{train}}$ such that $T(\mu_i) = y_i$ for all $i \in \{1, \dots, N_\text{train}\}$. This corresponds to solving the linear system:
\begin{equation}
\label{eq:kernel_linear_problem}
A\alpha = y, \quad \text{where } \alpha := (\alpha_1, \dots, \alpha_{N_\text{train}} ), \quad y := (y_1, \dots, y_{N_\text{train}}),
\end{equation}
with the kernel matrix $A := (K(\mu_i, \mu_j))_{(i,j)} \in \mathbb{R}^{qN_\text{train} \times qN_\text{train}}$. If $K$ is strictly positive definite, this system has a unique solution. Training the model thus reduces to solving this linear system, typically via numerical methods such as QR decomposition. To improve numerical stability, $A$ is often regularized by adding $\eta \text{Id}_{\mathbb{R}^{qN_\text{train}}}$ for some $\eta > 0$, yielding:
\begin{equation}
\label{eq:kernel_linear_problem_reg}
(A + \eta \text{Id}_{\mathbb{R}^{qN_\text{train}}})\alpha = y.
\end{equation}
If $K(\cdot, \mu_i)$ is continuously differentiable for all $i \in \{1, \dots, N_\text{train}\}$, then for sufficiently small $\eta$, the kernel-based \Gls{ML} model satisfies requirement \eqref{eq:ml_interpol_error}. Furthermore, kernel methods inherently satisfy the initial conditions in Definition \ref{def:ML_surrogate}, i.e., ${\underline{u^0_\text{ML}}_n \equiv 0}$ and $d_{\mu_i} \underline{u^0_\text{ML}}_n \equiv 0$ for $1 \leq n \leq N_\text{RB}$, due to ${u^0_{\text{RB}} \equiv 0}$ and the uniqueness of \eqref{eq:kernel_linear_problem_reg}. 

The training of such kernel methods is computationally efficient as long as the dataset size remains moderate. However, scalability issues arise with large datasets. In such cases, alternative approaches like \Gls{VKOGA} \cite{santin2019kernel} can mitigate these challenges. In our case, we instead limit the training set to the most recent $N_\text{train} = 10$ evaluated parameters and their corresponding \Gls{RB} solutions. This ensures local accuracy near the current optimization trajectory while improving numerical stability, particularly when training data clusters within $\mathcal{P}$. It also allows for smaller regularization parameters, reducing interpolation error. For our experiments, we set $\eta = 10^{-12}$. Additionally, to further enhance numerical stability, training data is normalized to the range $[-1,1]^2$.

A crucial factor in approximation quality is the choice of the kernel function. For our application, radial basis function (RBF) kernels are particularly effective. We employ a Gaussian kernel, i.e.
$
K(\mu_i, \mu_j) := e^{-0.01 \|\mu_i-\mu_j\|_{\mathbb{R}^P}^2}\text{Id}_{\mathbb{R}^q},
$
which is continuously differentiable for all $\mu_j \in \mathcal{P}$, with derivative

\begin{equation*}
d_{\mu_i} K(\mu, \mu_j) = -2 \times 10^{-4} (\mu - \mu_j)_i e^{-0.01 \|\mu_i-\mu_j\|_{\mathbb{R}^P}^2}\text{Id}_{\mathbb{R}^q}.
\end{equation*}

\subsection{Methods used}
We conduct four separate optimization experiments, each employing a different algorithm:

\begin{enumerate}
    \item \textbf{\Gls{FOM}-Opt:} Optimization is performed using \acrshort{BFGS} directly on the \Gls{FOM}, without a \Gls{TR} scheme.
    
    \item \textbf{\Gls{TR}-\Gls{RB}-Opt:} Optimization follows a \Gls{TR} scheme with \Gls{RB}-\Gls{ROM}, as demonstrated by Qian et al. \cite{qian2017certified}.
    
    \item \textbf{Relaxed \Gls{TR}-\Gls{RB}-Opt:} The optimization algorithm described in Section \ref{sec:TR} is applied, but using only the \Gls{RB} model for parameter inference, without the \Gls{ML} model.
    
    \item \textbf{Relaxed \Gls{TR}-\Gls{RB}-\Gls{ML}-Opt:} Optimization follows the algorithm detailed in Section \ref{sec:TR}, incorporating the \Gls{ML} model.
\end{enumerate}

All algorithms are implemented in \texttt{Python}, utilizing \texttt{pyMOR} \cite{Milk_2016} for discretization and model order reduction, including \Gls{HaPOD}. The kernel method implementation also leverages functions from \Gls{VKOGA} \cite{santin2019kernel}. The source code is available at \cite{klein_2025_15063806}. All computations were performed on a custom-built PC equipped with an AMD Ryzen 7 3700X 8-Core Processor (16 threads) and 96 GB RAM. The continuity and coercivity constants, or their respective bounds, were computed analogously to \cite{qian2017certified} using the \textit{Riesz-eigenvalue approach} for $\gamma_d$ and the \textit{min-/max-theta method} for the bounds $\alpha_\text{LB}(\mu)$ and $\gamma_{a_{\mu_i}}^\text{UB}(\mu)$; see \cite{qian2017certified, haasdonk_min_theta} for details.

Across all experiments, the optimization termination tolerance is set to $\tau = 10^{-3}$. The backtracking step size decay factor is $\kappa_{\text{bt}} = 0.5$, with an initial step size of $\alpha^{(0)}_0 = 10^{-3}$. For all trust-region-based algorithms (Experiments 2 -- 4), the subproblem termination tolerance is set to $\tau_{\text{sub}} = 5 \cdot 10^{-4}$, while the initial relative error tolerance is $\epsilon^{(0)}_L = 0.1$, with a decay factor of $\beta_1 = 0.95$.  For relaxed trust-region variants (Experiments 3 and 4), the relative error is checked every 25 steps ($l_{\text{check}} = 25$), with a minimum step size of $\epsilon_{\text{cutoff}} = 10^{-6}$. The initial maximum size is $\alpha^{(0)}_L = 0.01$, with a decreasing factor $\beta_2 = 0.95$. To generate training data and reinitialize the \Gls{ML}-\Gls{ROM} upon expanding the \Gls{RB} space, we set $l_{\text{warmup}} = 3$. 

Since optimization behavior depends on the initial guess $\mu^{(0)}$, we conduct ten independent runs per experiment, with $\mu^{(0)}$ uniformly sampled from $\mathcal{P}$. These initial values remain consistent across all experiments to ensure comparability.

\begin{table}[b]
\setlength{\tabcolsep}{5pt}
\renewcommand{\arraystretch}{1.0} 
\centering
\begin{tabular}{l|ccccc}
\toprule
Algorithm & Time $[s]$ & Speed-up & FOM eval. & RB-ROM eval. & ML-ROM eval. \\
\midrule
\Gls{FOM}-Opt & 2550.20 & -- & 68.40 & -- & -- \\
\Gls{TR}-\Gls{RB}-Opt & 911.65 & 2.80 & 7.50 & 154.60 & -- \\
Relaxed \Gls{TR}-\Gls{RB}-Opt & 406.42 & 6.27 & 4.40 & 90.70 & -- \\
Relaxed \Gls{TR}-\Gls{RB}-\Gls{ML}-Opt & 304.50 & 8.38 & 5.10 & 30.90 & 108.00 \\
\bottomrule
\end{tabular}
\caption{\label{tab:runtimes_and_num_evals} Total runtime and number of evaluations for \Gls{FOM}, \Gls{RB}-\Gls{ROM}, and \Gls{ML}-\Gls{ROM} across selected algorithms, averaged over ten optimization runs with different initial guesses.}
\end{table}

\begin{figure}[t]
\centering
\includegraphics[width=0.95\textwidth]{./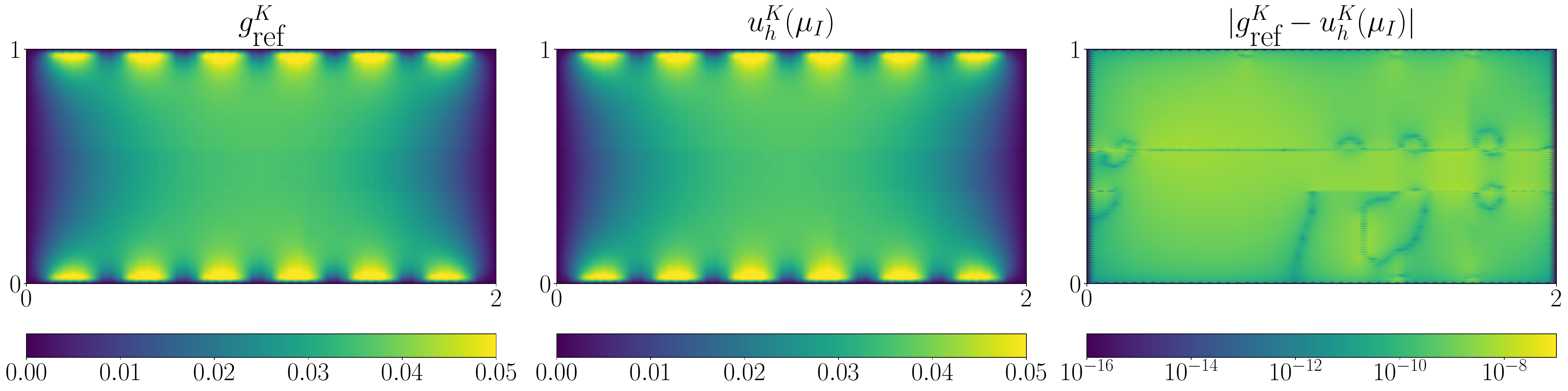}
\caption{\label{fig:state_compare} 
Comparison of the desired state $g_{\text{ref}}$ (left), the final state obtained using the relaxed 
\Gls{TR}-\Gls{RB}-\Gls{ML} algorithm with initial guess $\mu^{(0)} = (0.026, 0.020)$ (middle), 
and their absolute differences (right) at the final time step ($K = 10000$).}
\end{figure}

\subsection{Results}
All optimization runs successfully converge within the specified tolerance to the global minimum at $\bar{\mu} = (0.05, 0.05)$. An example is illustrated in Figure \ref{fig:state_compare}. Table \ref{tab:runtimes_and_num_evals} presents the total runtimes and the number of model evaluations, averaged over ten optimization runs with varying initial guesses. All algorithms outperform the \Gls{FOM} variant in terms of computational efficiency. Notably, the relaxed \Gls{TR}-\Gls{RB}-\Gls{ML} approach achieves the most significant reduction in computation time, with a speed-up factor of 8.38. This result demonstrates that augmenting \Gls{RB}-\gls{ROM} with \Gls{ML}-\Gls{ROM} can be a viable strategy for reducing computational costs.

This becomes even more evident when examining the evaluation times for individual queries in greater detail. Figure \ref{fig:primal_run_bars} illustrates the evaluation times for the optimization using the relaxed \Gls{TR}-\Gls{RB} method, starting at $\mu^{(0)} = (0.026, 0.020)$. The figure shows the time required to compute primal solutions and error estimates of the \Gls{RB}-\Gls{ROM} for each queried parameter. Most evaluations are carried out by \Gls{ML}-\glspl{ROM}, while \Gls{RB}-\glspl{ROM} are primarily used at the beginning of each subproblem iteration for collecting training data and towards the end of the optimization process, where they are used more frequently, due to the more accurate approximations. As expected, \Gls{ML}-\Gls{ROM} evaluations are significantly faster, up to a factor of $10^2$, compared to \Gls{RB}-\Gls{ROM} evaluations. Moreover, the effective online speed-up is even greater since gradients can be computed directly via the chain rule, whereas \Gls{FOM} and \Gls{RB}-\Gls{ROM} require additional calculations obtaining the adjoint solution.

Despite the impressive speed-up in individual parameter inferences, the overall speed-up is less pronounced, as shown in Table \ref{tab:runtimes_and_num_evals}. This discrepancy arises because the optimization process includes several additional computationally intensive steps that remain unoptimized and do not benefit from \Gls{ML}-\Gls{ROM} acceleration.

\begin{figure}[t]
\centering
\includegraphics[width=\textwidth]{./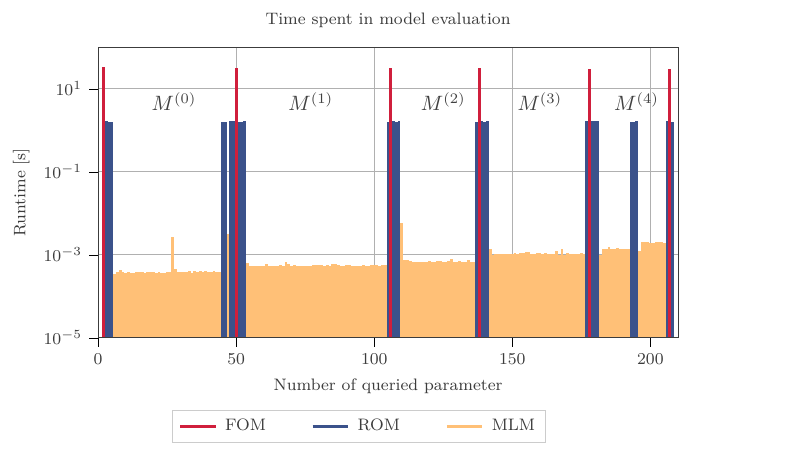}
\caption{\label{fig:primal_run_bars} Evaluation times of the primal models for each queried parameter during the run performed by the relaxed \Gls{TR}-\Gls{RB}-\Gls{ML} algorithm, starting at $\mu^{(0)} = (0.026, 0.020)$. The colors indicate the model used: \Gls{FOM} (red), \Gls{RB}-\Gls{ROM} (blue), and \Gls{ML}-\Gls{ROM} (orange). Additionally, the scope of the \Gls{RB}-\Gls{ROM}s $M^{(i)}$ is shown.}
\end{figure}

The most time-consuming steps in the optimization process are \Gls{FOM} evaluations, required for constructing \Gls{RB} spaces, and \Gls{RB}-\Gls{ROM} evaluations, necessary for generating training data. In the relaxed \Gls{TR}-\Gls{RB}-\Gls{ML} optimization, \Gls{ML}-\Gls{ROM} evaluations contribute, on average, less than $0.04\%$ of the total runtime, while \Gls{FOM} evaluations account for $54.44\%$, \Gls{RB}-\Gls{ROM} evaluations for $12.45\%$, and \Gls{RB} space construction for $12.69\%$. Another costly step is the initialization of the optimization setup, particularly the construction of the \Gls{FOM}. These findings align with previous studies \cite{banholzer2020adaptiveprojectednewtonnonconforming, Keil_2021}, which highlight the extension of the \Gls{RB} space as a critical performance bottleneck in \Gls{TR} approaches.

From Table \ref{tab:runtimes_and_num_evals}, we observe that incorporating \Gls{ML}-\Gls{ROM} can sometimes negatively impact performance. Optimization runs using \Gls{ML} methods generally require more \Gls{FOM} evaluations and, consequently, more \Gls{RB} space extensions compared to the relaxed \Gls{TR}-\Gls{RB} method without \Gls{ML}-\Gls{ROM}. This occurs because \Gls{ML}-\glspl{ROM} typically exhibit higher errors than \Gls{RB}-\glspl{ROM}, potentially leading to stagnation away from the optimum. Such stagnation triggers the check that $\mu^{(i, l)}$ is an element of $\tilde{T}^{(i)}$ (Line \ref{line:tr_check} in Algorithm \ref{algo:inner_loop}), potentially resulting in more frequent fallbacks to the main loop. This also explains why total parameter evaluations increase when \Gls{ML}-\Gls{ROM} is incorporated, as seen in Table \ref{tab:runtimes_and_num_evals}.

However, this also suggests that the observed speed-up cannot be entirely attributed to the incorporation of \Gls{ML}-\glspl{ROM}. A major contributing factor to the reduced computation time for the relaxed \Gls{TR}-\Gls{RB}-\Gls{ML} optimization is the reduction in \Gls{FOM} evaluations due to the introduction of the relaxed trust region. This becomes particularly evident when comparing the speed-up factors of \Gls{TR}-\Gls{RB}-Opt (2.80) and relaxed \Gls{TR}-\Gls{RB}-Opt (6.27). A likely explanation is that the error estimators introduced in Section \ref{sssec:a_post_err_est} tend to overestimate the actual error of \Gls{RB} solutions, imposing unnecessarily strict restrictions for convergence and prompting premature \Gls{RB} space reconstructions. An additional effect that further reduces computation time is the reduction in \gls{RB} evaluations required. This occurs because it is no longer demanded that $\mu^{(i)}(k)$ remains within the trust region, allowing the backtracking routine to more quickly return valid steps. These observations underscore that improving \gls{TR}-\gls{RB} methods hinges not only on developing better surrogate models but also on finding higher-quality error estimators and more refined conditions to manage the costly re-instantiation of the surrogate models.

Finally, we discuss the characteristics of \Gls{ML}-\Gls{ROM} in an optimization context. We also tested alternative \Gls{ML} methods, including deep neural networks and 2L-VKOGA \cite{wenzel2023datadrivenkerneldesignsoptimized}, but did not achieve satisfactory results. This highlights issues that need to be addressed to make machine learning a more reliable foundation for surrogate models. Our experiments reveal that the requirements on the \Gls{ML} algorithm outlined in Section \ref{ssec:ML_general_considerations} are quite restrictive for practical applications. In particular, the constraint that \Gls{ML} methods must train efficiently on limited data prevents the use of more complex models. Combined with the need to guarantee a certain level of accuracy for effective optimization, these constraints limit both the applicability of \Gls{ML}-based optimization and the achievable computational savings.

This issue is exacerbated during optimization, as the parameters for which the \Gls{ML}-\Gls{ROM} is evaluated often lie \textit{outside} of the training data distribution. Consequently, the approximation quality depends more on the hypothesis set and hyperparameters of the model than on the training data itself. This necessitates that the model not only \textit{generalizes} well but also accurately captures the local solution manifold.

However, ensuring adequate generalization requires careful selection of the hypothesis set, balancing model complexity against training time and requirements on the training data. This involves choosing appropriate hyperparameters, kernel functions, and regularization constants, which must be set a priori. Hyperparameter tuning is already challenging, as it requires empirical validation, and becomes even more difficult during optimization, where the training data distribution and required accuracy evolve dynamically. Near the optimum, training data tend to cluster due to frequent backtracking steps, increasing the required accuracy and demanding different kernel parameterizations than those suitable at the start of the optimization. Attempts to adapt hyperparameters dynamically during optimization yielded unpredictable results and did not significantly improve overall performance.
 
\section{Conclusion and Outlook}

In this work, we have demonstrated that kernel methods can be effectively applied to optimize quadratic cost functionals constrained by parametrized parabolic \glspl{PDE} within a trust-region reduced-basis framework, offering a viable approach to reducing computational costs. By learning solutions to the \Gls{PDE} constraints projected onto reduced spaces, the proposed \Gls{ML}-\Gls{ROM} can be certified, while the application of the chain rule enables direct access to the gradient of the cost functional. However, the derived error estimators remain computationally inefficient  relative to parameter inference by the \Gls{ML} surrogate. The proposed algorithm addresses this issue by integrating these aspects into a trust-region scheme while circumventing direct reliance on the error estimator. This is achieved by relaxing the \Gls{TR} constraint, ensuring the validity of the surrogates while maintaining computational efficiency. Additionally, the hierarchical model-calling scheme allows the algorithm to dynamically adapt to the current region of interest along the optimization trajectory.

However, as demonstrated in the example of temperature distribution matching, further advancements are necessary to establish \Gls{ML}-based surrogates as a practical choice, particularly for more complex problems. Beyond the need for sharper and more efficient error estimators and greater robustness to erroneous surrogate models, improvements in the \Gls{ML} models themselves are required. The key challenge is to enable the use of more sophisticated models while maintaining computational feasibility. Future efforts should focus on approaches that incorporate additional information about the \Gls{PDE} constraints alongside training data, such as semi-supervised machine learning methods. 

Of particular interest are recent developments in \glspl{PINN} and their variants \cite{raissi2019physics, yu2018deep, tanyu2023deep}. These models explicitly integrate \Gls{PDE} constraints into their architecture and, in theory, can produce meaningful global approximations even with limited training data. Incorporating \glspl{PINN} into the existing framework could therefore be a promising avenue for future research, advancing the integration of \Gls{ML} methods into parametrized parabolic \Gls{PDE}-constrained optimization tasks.

\section*{Statements and declarations}
\bmhead{Funding} The authors acknowledges funding by the Deutsche Forschungsgemeinschaft (DFG, German Research Foundation) under Germany's Excellence Strategy EXC 2044–390685587, Mathematics Münster: Dynamics–Geometry–Structure.
\bmhead{Competing interests} The authors have no relevant financial or non-financial interests to disclose.
\bmhead{Acknowledgements} Benedikt Klein thanks Hendrik Kleikamp from the University of Münster for the long and fruitful exchange.

\bibliography{./source.bib}


\begin{thebibliography}{54}
\ifx \bisbn   \undefined \def \bisbn  #1{ISBN #1}\fi
\ifx \binits  \undefined \def \binits#1{#1}\fi
\ifx \bauthor  \undefined \def \bauthor#1{#1}\fi
\ifx \batitle  \undefined \def \batitle#1{#1}\fi
\ifx \bjtitle  \undefined \def \bjtitle#1{#1}\fi
\ifx \bvolume  \undefined \def \bvolume#1{\textbf{#1}}\fi
\ifx \byear  \undefined \def \byear#1{#1}\fi
\ifx \bissue  \undefined \def \bissue#1{#1}\fi
\ifx \bfpage  \undefined \def \bfpage#1{#1}\fi
\ifx \blpage  \undefined \def \blpage #1{#1}\fi
\ifx \burl  \undefined \def \burl#1{\textsf{#1}}\fi
\ifx \doiurl  \undefined \def \doiurl#1{\url{https://doi.org/#1}}\fi
\ifx \betal  \undefined \def \betal{\textit{et al.}}\fi
\ifx \binstitute  \undefined \def \binstitute#1{#1}\fi
\ifx \binstitutionaled  \undefined \def \binstitutionaled#1{#1}\fi
\ifx \bctitle  \undefined \def \bctitle#1{#1}\fi
\ifx \beditor  \undefined \def \beditor#1{#1}\fi
\ifx \bpublisher  \undefined \def \bpublisher#1{#1}\fi
\ifx \bbtitle  \undefined \def \bbtitle#1{#1}\fi
\ifx \bedition  \undefined \def \bedition#1{#1}\fi
\ifx \bseriesno  \undefined \def \bseriesno#1{#1}\fi
\ifx \blocation  \undefined \def \blocation#1{#1}\fi
\ifx \bsertitle  \undefined \def \bsertitle#1{#1}\fi
\ifx \bsnm \undefined \def \bsnm#1{#1}\fi
\ifx \bsuffix \undefined \def \bsuffix#1{#1}\fi
\ifx \bparticle \undefined \def \bparticle#1{#1}\fi
\ifx \barticle \undefined \def \barticle#1{#1}\fi
\bibcommenthead
\ifx \bconfdate \undefined \def \bconfdate #1{#1}\fi
\ifx \botherref \undefined \def \botherref #1{#1}\fi
\ifx \url \undefined \def \url#1{\textsf{#1}}\fi
\ifx \bchapter \undefined \def \bchapter#1{#1}\fi
\ifx \bbook \undefined \def \bbook#1{#1}\fi
\ifx \bcomment \undefined \def \bcomment#1{#1}\fi
\ifx \oauthor \undefined \def \oauthor#1{#1}\fi
\ifx \citeauthoryear \undefined \def \citeauthoryear#1{#1}\fi
\ifx \endbibitem  \undefined \def \endbibitem {}\fi
\ifx \bconflocation  \undefined \def \bconflocation#1{#1}\fi
\ifx \arxivurl  \undefined \def \arxivurl#1{\textsf{#1}}\fi
\csname PreBibitemsHook\endcsname

\bibitem[\protect\citeauthoryear{Haasdonk et~al.}{2023}]{haasdonk2022new}
\begin{barticle}
\bauthor{\bsnm{Haasdonk}, \binits{B.}},
\bauthor{\bsnm{Kleikamp}, \binits{H.}},
\bauthor{\bsnm{Ohlberger}, \binits{M.}},
\bauthor{\bsnm{Schindler}, \binits{F.}},
\bauthor{\bsnm{Wenzel}, \binits{T.}}:
\batitle{{A new certified hierarchical and adaptive {RB}-{ML}-{ROM} surrogate
  model for parametrized {PDE}s}}.
\bjtitle{SIAM J. Sci. Comput.}
\bvolume{45}(\bissue{3}),
\bfpage{1039}--\blpage{1065}
(\byear{2023})
\doiurl{10.1137/22M1493318}
\end{barticle}
\endbibitem

\bibitem[\protect\citeauthoryear{Yue and
  Meerbergen}{2013}]{yue2013accelerating}
\begin{barticle}
\bauthor{\bsnm{Yue}, \binits{Y.}},
\bauthor{\bsnm{Meerbergen}, \binits{K.}}:
\batitle{Accelerating optimization of parametric linear systems by model order
  reduction}.
\bjtitle{SIAM Journal on Optimization}
\bvolume{23}(\bissue{2}),
\bfpage{1344}--\blpage{1370}
(\byear{2013})
\end{barticle}
\endbibitem

\bibitem[\protect\citeauthoryear{Keil et~al.}{2021}]{Keil_2021}
\begin{barticle}
\bauthor{\bsnm{Keil}, \binits{T.}},
\bauthor{\bsnm{Mechelli}, \binits{L.}},
\bauthor{\bsnm{Ohlberger}, \binits{M.}},
\bauthor{\bsnm{Schindler}, \binits{F.}},
\bauthor{\bsnm{Volkwein}, \binits{S.}}:
\batitle{A non-conforming dual approach for adaptive trust-region reduced basis
  approximation of pde-constrained parameter optimization}.
\bjtitle{ESAIM: Mathematical Modelling and Numerical Analysis}
\bvolume{55}(\bissue{3}),
\bfpage{1239}--\blpage{1269}
(\byear{2021})
\doiurl{10.1051/m2an/2021019}
\end{barticle}
\endbibitem

\bibitem[\protect\citeauthoryear{Banholzer
  et~al.}{2022}]{banholzer2020adaptiveprojectednewtonnonconforming}
\begin{barticle}
\bauthor{\bsnm{Banholzer}, \binits{S.}},
\bauthor{\bsnm{Keil}, \binits{T.}},
\bauthor{\bsnm{Ohlberger}, \binits{M.}},
\bauthor{\bsnm{Mechelli}, \binits{L.}},
\bauthor{\bsnm{Schindler}, \binits{F.}},
\bauthor{\bsnm{Volkwein}, \binits{S.}}:
\batitle{An adaptive projected {N}ewton non-conforming dual approach for
  trust-region reduced basis approximation of {PDE}-constrained parameter
  optimization}.
\bjtitle{Pure Appl. Funct. Anal.}
\bvolume{7}(\bissue{5}),
\bfpage{1561}--\blpage{1596}
(\byear{2022})
\end{barticle}
\endbibitem

\bibitem[\protect\citeauthoryear{Keil and Ohlberger}{2024}]{keil2024relaxed}
\begin{barticle}
\bauthor{\bsnm{Keil}, \binits{T.}},
\bauthor{\bsnm{Ohlberger}, \binits{M.}}:
\batitle{A relaxed localized trust-region reduced basis approach for
  optimization of multiscale problems}.
\bjtitle{ESAIM Math. Model. Numer. Anal.}
\bvolume{58}(\bissue{1}),
\bfpage{79}--\blpage{105}
(\byear{2024})
\doiurl{10.1051/m2an/2023089}
\end{barticle}
\endbibitem

\bibitem[\protect\citeauthoryear{Benner et~al.}{2017}]{MR3701994}
\begin{bbook}
\beditor{\bsnm{Benner}, \binits{P.}},
\beditor{\bsnm{Ohlberger}, \binits{M.}},
\beditor{\bsnm{Patera}, \binits{A.}},
\beditor{\bsnm{Rozza}, \binits{G.}},
\beditor{\bsnm{Urban}, \binits{K.}} (eds.):
\bbtitle{Model Reduction of Parametrized Systems}.
\bsertitle{MS\&A. Modeling, Simulation and Applications},
vol. \bseriesno{17},
p. \bfpage{504}.
\bpublisher{Springer},
\blocation{Cham, Germany}
(\byear{2017}).
\doiurl{10.1007/978-3-319-58786-8} .
\bcomment{Selected papers from the 3rd MoRePaS Conference held at the
  International School for Advanced Studies (SISSA), Trieste, October 13--16,
  2015}
\end{bbook}
\endbibitem

\bibitem[\protect\citeauthoryear{Benner et~al.}{}]{MR3672144}
\begin{botherref}
\oauthor{\bsnm{Benner}, \binits{P.}},
\oauthor{\bsnm{Cohen}, \binits{A.}},
\oauthor{\bsnm{Ohlberger}, \binits{M.}},
\oauthor{\bsnm{Willcox}, \binits{K.}} (eds.):
Model Reduction and Approximation,
p. 412.
SIAM,
Philadelphia, PA, USA.
\doiurl{10.1137/1.9781611974829} .
Theory and algorithms
\end{botherref}
\endbibitem

\bibitem[\protect\citeauthoryear{Haasdonk and Ohlberger}{2008}]{Haasdonk2008}
\begin{barticle}
\bauthor{\bsnm{Haasdonk}, \binits{B.}},
\bauthor{\bsnm{Ohlberger}, \binits{M.}}:
\batitle{Reduced basis method for finite volume approximations of parametrized
  linear evolution equations}.
\bjtitle{Mathematical Modelling and Numerical Analysis}
\bvolume{42}(\bissue{2}),
\bfpage{277}--\blpage{302}
(\byear{2008})
\doiurl{10.1051/m2an:2008001}
\end{barticle}
\endbibitem

\bibitem[\protect\citeauthoryear{Haasdonk}{2013}]{Haasdonk13}
\begin{barticle}
\bauthor{\bsnm{Haasdonk}, \binits{B.}}:
\batitle{Convergence rates of the {POD}-greedy method}.
\bjtitle{ESAIM Math. Model. Numer. Anal.}
\bvolume{47}(\bissue{3}),
\bfpage{859}--\blpage{873}
(\byear{2013})
\doiurl{10.1051/m2an/2012045}
\end{barticle}
\endbibitem

\bibitem[\protect\citeauthoryear{Himpe et~al.}{2018}]{himpe2018hierarchical}
\begin{barticle}
\bauthor{\bsnm{Himpe}, \binits{C.}},
\bauthor{\bsnm{Leibner}, \binits{T.}},
\bauthor{\bsnm{Rave}, \binits{S.}}:
\batitle{Hierarchical approximate proper orthogonal decomposition}.
\bjtitle{SIAM Journal on Scientific Computing}
\bvolume{40}(\bissue{5}),
\bfpage{3267}--\blpage{3292}
(\byear{2018})
\end{barticle}
\endbibitem

\bibitem[\protect\citeauthoryear{Kammann et~al.}{2013}]{KTV13}
\begin{barticle}
\bauthor{\bsnm{Kammann}, \binits{E.}},
\bauthor{\bsnm{Tr\"oltzsch}, \binits{F.}},
\bauthor{\bsnm{Volkwein}, \binits{S.}}:
\batitle{A posteriori error estimation for semilinear parabolic optimal control
  problems with application to model reduction by {POD}}.
\bjtitle{ESAIM: M2AN}
\bvolume{47}(\bissue{2}),
\bfpage{555}--\blpage{581}
(\byear{2013})
\doiurl{10.1051/m2an/2012037}
\end{barticle}
\endbibitem

\bibitem[\protect\citeauthoryear{Grepl and K\"archer}{2011}]{GK2011}
\begin{barticle}
\bauthor{\bsnm{Grepl}, \binits{M.A.}},
\bauthor{\bsnm{K\"archer}, \binits{M.}}:
\batitle{Reduced basis a posteriori error bounds for parametrized
  linear-quadratic elliptic optimal control problems}.
\bjtitle{C. R. Math. Acad. Sci. Paris}
\bvolume{349}(\bissue{15-16}),
\bfpage{873}--\blpage{877}
(\byear{2011})
\doiurl{10.1016/j.crma.2011.07.010}
\end{barticle}
\endbibitem

\bibitem[\protect\citeauthoryear{Negri et~al.}{2013}]{NRMQ2013}
\begin{barticle}
\bauthor{\bsnm{Negri}, \binits{F.}},
\bauthor{\bsnm{Rozza}, \binits{G.}},
\bauthor{\bsnm{Manzoni}, \binits{A.}},
\bauthor{\bsnm{Quateroni}, \binits{A.}}:
\batitle{Reduced basis method for parametrized elliptic optimal control
  problems}.
\bjtitle{SIAM J. Sci. Comput.}
\bvolume{35}(\bissue{5}),
\bfpage{2316}--\blpage{2340}
(\byear{2013})
\doiurl{10.1137/120894737}
\end{barticle}
\endbibitem

\bibitem[\protect\citeauthoryear{Hinze}{[2023] \copyright 2023}]{MR4628189}
\begin{bchapter}
\bauthor{\bsnm{Hinze}, \binits{M.}}:
\bctitle{Model order reduction for optimal control problems}.
In: \bbtitle{Model Order Reduction and Applications}.
\bsertitle{Lecture Notes in Math.},
vol. \bseriesno{2328},
pp. \bfpage{125}--\blpage{199}.
\bpublisher{Springer},
\blocation{Cham, Germany}
(\byear{[2023] \copyright 2023}).
\doiurl{10.1007/978-3-031-29563-8\_3}
\end{bchapter}
\endbibitem

\bibitem[\protect\citeauthoryear{Zahr and Farhat}{2015}]{ZF2015}
\begin{barticle}
\bauthor{\bsnm{Zahr}, \binits{M.J.}},
\bauthor{\bsnm{Farhat}, \binits{C.}}:
\batitle{Progressive construction of a parametric reduced-order model for
  {PDE}-constrained optimization}.
\bjtitle{Internat. J. Numer. Methods Engrg.}
\bvolume{102}(\bissue{5}),
\bfpage{1111}--\blpage{1135}
(\byear{2015})
\doiurl{10.1002/nme.4770}
\end{barticle}
\endbibitem

\bibitem[\protect\citeauthoryear{Garmatter et~al.}{2016}]{GHH2016}
\begin{barticle}
\bauthor{\bsnm{Garmatter}, \binits{D.}},
\bauthor{\bsnm{Haasdonk}, \binits{B.}},
\bauthor{\bsnm{Harrach}, \binits{B.}}:
\batitle{A reduced basis {L}andweber method for nonlinear inverse problems}.
\bjtitle{Inverse Problems}
\bvolume{32}(\bissue{3}),
\bfpage{035001}--\blpage{21}
(\byear{2016})
\doiurl{10.1088/0266-5611/32/3/035001}
\end{barticle}
\endbibitem

\bibitem[\protect\citeauthoryear{Ohlberger et~al.}{2018}]{MR3823611}
\begin{bchapter}
\bauthor{\bsnm{Ohlberger}, \binits{M.}},
\bauthor{\bsnm{Schaefer}, \binits{M.}},
\bauthor{\bsnm{Schindler}, \binits{F.}}:
\bctitle{Localized model reduction in {PDE} constrained optimization}.
In: \bbtitle{Shape Optimization, Homogenization and Optimal Control}.
\bsertitle{Internat. Ser. Numer. Math.},
vol. \bseriesno{169},
pp. \bfpage{143}--\blpage{163}.
\bpublisher{Birkh\"{a}user/Springer},
\blocation{Cham, Germany}
(\byear{2018})
\end{bchapter}
\endbibitem

\bibitem[\protect\citeauthoryear{Rogg et~al.}{2017}]{RoggTV17}
\begin{botherref}
\oauthor{\bsnm{Rogg}, \binits{S.}},
\oauthor{\bsnm{Trenz}, \binits{S.}},
\oauthor{\bsnm{Volkwein}, \binits{S.}}:
Trust-region {POD} using a-posteriori error estimation for semilinear parabolic
  optimal control problems.
Konstanzer Schriften in Mathematik No. 359
(2017)
\end{botherref}
\endbibitem

\bibitem[\protect\citeauthoryear{Qian et~al.}{2017}]{qian2017certified}
\begin{barticle}
\bauthor{\bsnm{Qian}, \binits{E.}},
\bauthor{\bsnm{Grepl}, \binits{M.}},
\bauthor{\bsnm{Veroy}, \binits{K.}},
\bauthor{\bsnm{Willcox}, \binits{K.}}:
\batitle{A certified trust region reduced basis approach to {PDE}-constrained
  optimization}.
\bjtitle{SIAM Journal on Scientific Computing}
\bvolume{39}(\bissue{5}),
\bfpage{434}--\blpage{460}
(\byear{2017})
\end{barticle}
\endbibitem

\bibitem[\protect\citeauthoryear{Wen and Zahr}{2023}]{MR4569234}
\begin{barticle}
\bauthor{\bsnm{Wen}, \binits{T.}},
\bauthor{\bsnm{Zahr}, \binits{M.J.}}:
\batitle{A globally convergent method to accelerate large-scale optimization
  using on-the-fly model hyperreduction: application to shape optimization}.
\bjtitle{J. Comput. Phys.}
\bvolume{484},
\bfpage{112082}--\blpage{33}
(\byear{2023})
\doiurl{10.1016/j.jcp.2023.112082}
\end{barticle}
\endbibitem

\bibitem[\protect\citeauthoryear{Hesthaven and
  Ubbiali}{2018}]{hesthaven2018non}
\begin{barticle}
\bauthor{\bsnm{Hesthaven}, \binits{J.S.}},
\bauthor{\bsnm{Ubbiali}, \binits{S.}}:
\batitle{Non-intrusive reduced order modeling of nonlinear problems using
  neural networks}.
\bjtitle{Journal of Computational Physics}
\bvolume{363},
\bfpage{55}--\blpage{78}
(\byear{2018})
\end{barticle}
\endbibitem

\bibitem[\protect\citeauthoryear{Kutyniok et~al.}{2022}]{Kutyniok22}
\begin{barticle}
\bauthor{\bsnm{Kutyniok}, \binits{G.}},
\bauthor{\bsnm{Petersen}, \binits{P.}},
\bauthor{\bsnm{Raslan}, \binits{M.}},
\bauthor{\bsnm{Schneider}, \binits{R.}}:
\batitle{A theoretical analysis of deep neural networks and parametric {PDE}s}.
\bjtitle{Constr. Approx.}
\bvolume{55}(\bissue{1}),
\bfpage{73}--\blpage{125}
(\byear{2022})
\doiurl{10.1007/s00365-021-09551-4}
\end{barticle}
\endbibitem

\bibitem[\protect\citeauthoryear{Fresca and Manzoni}{2022}]{fresca2022pod}
\begin{barticle}
\bauthor{\bsnm{Fresca}, \binits{S.}},
\bauthor{\bsnm{Manzoni}, \binits{A.}}:
\batitle{{POD}-{DL}-{ROM}: Enhancing deep learning-based reduced order models
  for nonlinear parametrized pdes by proper orthogonal decomposition}.
\bjtitle{Computer Methods in Applied Mechanics and Engineering}
\bvolume{388},
\bfpage{114181}
(\byear{2022})
\end{barticle}
\endbibitem

\bibitem[\protect\citeauthoryear{Conti et~al.}{2023}]{ManzoniHesthaven23}
\begin{barticle}
\bauthor{\bsnm{Conti}, \binits{P.}},
\bauthor{\bsnm{Guo}, \binits{M.}},
\bauthor{\bsnm{Manzoni}, \binits{A.}},
\bauthor{\bsnm{Hesthaven}, \binits{J.S.}}:
\batitle{Multi-fidelity surrogate modeling using long short-term memory
  networks}.
\bjtitle{Comput. Methods Appl. Mech. Engrg.}
\bvolume{404},
\bfpage{115811}--\blpage{22}
(\byear{2023})
\doiurl{10.1016/j.cma.2022.115811}
\end{barticle}
\endbibitem

\bibitem[\protect\citeauthoryear{Brivio et~al.}{2024}]{FrescaManzoni24}
\begin{barticle}
\bauthor{\bsnm{Brivio}, \binits{S.}},
\bauthor{\bsnm{Fresca}, \binits{S.}},
\bauthor{\bsnm{Franco}, \binits{N.R.}},
\bauthor{\bsnm{Manzoni}, \binits{A.}}:
\batitle{Error estimates for {POD}-{DL}-{ROM}s: a deep learning framework for
  reduced order modeling of nonlinear parametrized {PDE}s enhanced by proper
  orthogonal decomposition}.
\bjtitle{Adv. Comput. Math.}
\bvolume{50}(\bissue{3}),
\bfpage{33}--\blpage{46}
(\byear{2024})
\doiurl{10.1007/s10444-024-10110-1}
\end{barticle}
\endbibitem

\bibitem[\protect\citeauthoryear{Gavrilenko et~al.}{[2022] \copyright
  2022}]{MR4406092}
\begin{bchapter}
\bauthor{\bsnm{Gavrilenko}, \binits{P.}},
\bauthor{\bsnm{Haasdonk}, \binits{B.}},
\bauthor{\bsnm{Iliev}, \binits{O.}},
\bauthor{\bsnm{Ohlberger}, \binits{M.}},
\bauthor{\bsnm{Schindler}, \binits{F.}},
\bauthor{\bsnm{Toktaliev}, \binits{P.}},
\bauthor{\bsnm{Wenzel}, \binits{T.}},
\bauthor{\bsnm{Youssef}, \binits{M.}}:
\bctitle{A full order, reduced order and machine learning model pipeline for
  efficient prediction of reactive flows}.
In: \bbtitle{Large-scale Scientific Computing}.
\bsertitle{Lecture Notes in Comput. Sci.},
vol. \bseriesno{13127},
pp. \bfpage{378}--\blpage{386}.
\bpublisher{Springer},
\blocation{Berlin, Germany}
(\byear{[2022] \copyright 2022}).
\doiurl{10.1007/978-3-030-97549-4\_43}
\end{bchapter}
\endbibitem

\bibitem[\protect\citeauthoryear{Wenzel et~al.}{[2024] \copyright
  2024}]{wenzel2023application}
\begin{bchapter}
\bauthor{\bsnm{Wenzel}, \binits{T.}},
\bauthor{\bsnm{Haasdonk}, \binits{B.}},
\bauthor{\bsnm{Kleikamp}, \binits{H.}},
\bauthor{\bsnm{Ohlberger}, \binits{M.}},
\bauthor{\bsnm{Schindler}, \binits{F.}}:
\bctitle{Application of deep kernel models for certified and adaptive
  {RB}-{ML}-{ROM} surrogate modeling}.
In: \bbtitle{Large-scale Scientific Computations}.
\bsertitle{Lecture Notes in Comput. Sci.},
vol. \bseriesno{13952},
pp. \bfpage{117}--\blpage{125}.
\bpublisher{Springer},
\blocation{Cham}
(\byear{[2024] \copyright 2024}).
\doiurl{10.1007/978-3-031-56208-2\_11}
\end{bchapter}
\endbibitem

\bibitem[\protect\citeauthoryear{Lye et~al.}{2021}]{lye2021iterative}
\begin{barticle}
\bauthor{\bsnm{Lye}, \binits{K.O.}},
\bauthor{\bsnm{Mishra}, \binits{S.}},
\bauthor{\bsnm{Ray}, \binits{D.}},
\bauthor{\bsnm{Chandrashekar}, \binits{P.}}:
\batitle{Iterative surrogate model optimization (ismo): An active learning
  algorithm for pde constrained optimization with deep neural networks}.
\bjtitle{Computer Methods in Applied Mechanics and Engineering}
\bvolume{374},
\bfpage{113575}
(\byear{2021})
\end{barticle}
\endbibitem

\bibitem[\protect\citeauthoryear{Keil et~al.}{2022}]{keil2022adaptive}
\begin{barticle}
\bauthor{\bsnm{Keil}, \binits{T.}},
\bauthor{\bsnm{Kleikamp}, \binits{H.}},
\bauthor{\bsnm{Lorentzen}, \binits{R.J.}},
\bauthor{\bsnm{Oguntola}, \binits{M.B.}},
\bauthor{\bsnm{Ohlberger}, \binits{M.}}:
\batitle{Adaptive machine learning-based surrogate modeling to accelerate
  {PDE}-constrained optimization in enhanced oil recovery}.
\bjtitle{Advances in Computational Mathematics}
\bvolume{48}(\bissue{6}),
\bfpage{73}
(\byear{2022})
\end{barticle}
\endbibitem

\bibitem[\protect\citeauthoryear{Xu and
  Darve}{2020}]{xu2020physicsconstrainedlearningdatadriven}
\begin{botherref}
\oauthor{\bsnm{Xu}, \binits{K.}},
\oauthor{\bsnm{Darve}, \binits{E.}}:
Physics Constrained Learning for Data-driven Inverse Modeling from Sparse
  Observations
(2020).
\url{https://arxiv.org/abs/2002.10521}
\end{botherref}
\endbibitem

\bibitem[\protect\citeauthoryear{Kleikamp et~al.}{2025}]{MR4848008}
\begin{barticle}
\bauthor{\bsnm{Kleikamp}, \binits{H.}},
\bauthor{\bsnm{Lazar}, \binits{M.}},
\bauthor{\bsnm{Molinari}, \binits{C.}}:
\batitle{Be greedy and learn: efficient and certified algorithms for
  parametrized optimal control problems}.
\bjtitle{ESAIM Math. Model. Numer. Anal.}
\bvolume{59}(\bissue{1}),
\bfpage{291}--\blpage{330}
(\byear{2025})
\doiurl{10.1051/m2an/2024074}
\end{barticle}
\endbibitem

\bibitem[\protect\citeauthoryear{Raissi et~al.}{2019}]{raissi2019physics}
\begin{barticle}
\bauthor{\bsnm{Raissi}, \binits{M.}},
\bauthor{\bsnm{Perdikaris}, \binits{P.}},
\bauthor{\bsnm{Karniadakis}, \binits{G.E.}}:
\batitle{Physics-informed neural networks: A deep learning framework for
  solving forward and inverse problems involving nonlinear partial differential
  equations}.
\bjtitle{Journal of Computational physics}
\bvolume{378},
\bfpage{686}--\blpage{707}
(\byear{2019})
\end{barticle}
\endbibitem

\bibitem[\protect\citeauthoryear{E and Yu}{2018}]{MR3767958}
\begin{barticle}
\bauthor{\bsnm{E}, \binits{W.}},
\bauthor{\bsnm{Yu}, \binits{B.}}:
\batitle{The deep {R}itz method: a deep learning-based numerical algorithm for
  solving variational problems}.
\bjtitle{Commun. Math. Stat.}
\bvolume{6}(\bissue{1}),
\bfpage{1}--\blpage{12}
(\byear{2018})
\doiurl{10.1007/s40304-018-0127-z}
\end{barticle}
\endbibitem

\bibitem[\protect\citeauthoryear{Milk et~al.}{2016}]{Milk_2016}
\begin{barticle}
\bauthor{\bsnm{Milk}, \binits{R.}},
\bauthor{\bsnm{Rave}, \binits{S.}},
\bauthor{\bsnm{Schindler}, \binits{F.}}:
\batitle{{pyMOR} -- generic algorithms and interfaces for model order
  reduction}.
\bjtitle{{SIAM} Journal on Scientific Computing}
\bvolume{38}(\bissue{5}),
\bfpage{194}--\blpage{216}
(\byear{2016})
\doiurl{10.1137/15m1026614}
\end{barticle}
\endbibitem

\bibitem[\protect\citeauthoryear{Hinze et~al.}{2009}]{hinze2009optimization}
\begin{bbook}
\bauthor{\bsnm{Hinze}, \binits{M.}},
\bauthor{\bsnm{Pinnau}, \binits{R.}},
\bauthor{\bsnm{Ulbrich}, \binits{M.}},
\bauthor{\bsnm{Ulbrich}, \binits{S.}}:
\bbtitle{Optimization with PDE Constraints}
vol. \bseriesno{23}.
\bpublisher{Springer},
\blocation{Berlin, Germany}
(\byear{2009})
\end{bbook}
\endbibitem

\bibitem[\protect\citeauthoryear{Quarteroni
  et~al.}{2015}]{quarteroni2015reduced}
\begin{bbook}
\bauthor{\bsnm{Quarteroni}, \binits{A.}},
\bauthor{\bsnm{Manzoni}, \binits{A.}},
\bauthor{\bsnm{Negri}, \binits{F.}}:
\bbtitle{Reduced Basis Methods for Partial Differential Equations: an
  Introduction}
vol. \bseriesno{92}.
\bpublisher{Springer},
\blocation{Berlin, Germany}
(\byear{2015})
\end{bbook}
\endbibitem

\bibitem[\protect\citeauthoryear{Grepl and Patera}{2005}]{grepl2005posteriori}
\begin{barticle}
\bauthor{\bsnm{Grepl}, \binits{M.A.}},
\bauthor{\bsnm{Patera}, \binits{A.T.}}:
\batitle{A posteriori error bounds for reduced-basis approximations of
  parametrized parabolic partial differential equations}.
\bjtitle{ESAIM: Mathematical Modelling and Numerical Analysis}
\bvolume{39}(\bissue{1}),
\bfpage{157}--\blpage{181}
(\byear{2005})
\end{barticle}
\endbibitem

\bibitem[\protect\citeauthoryear{Haasdonk}{}]{haasdonk_min_theta}
\begin{botherref}
\oauthor{\bsnm{Haasdonk}, \binits{B.}}:
Chapter 2: Reduced Basis Methods for Parametrized PDEs \- A Tutorial
  Introduction for Stationary and Instationary Problems,
pp. 65--136.
\doiurl{10.1137/1.9781611974829.ch2}
\end{botherref}
\endbibitem

\bibitem[\protect\citeauthoryear{Buhr et~al.}{2014}]{buhr2014numerically}
\begin{botherref}
\oauthor{\bsnm{Buhr}, \binits{A.}},
\oauthor{\bsnm{Engwer}, \binits{C.}},
\oauthor{\bsnm{Ohlberger}, \binits{M.}},
\oauthor{\bsnm{Rave}, \binits{S.}}:
A numerically stable a posteriori error estimator for reduced basis
  approximations of elliptic equations
(2014)
{\href{https://arxiv.org/abs/1407.8005}{{arXiv:1407.8005}}}
{[math.NA]}
\end{botherref}
\endbibitem

\bibitem[\protect\citeauthoryear{Kutyniok
  et~al.}{2020}]{kutyniok2020theoretical}
\begin{botherref}
\oauthor{\bsnm{Kutyniok}, \binits{G.}},
\oauthor{\bsnm{Petersen}, \binits{P.}},
\oauthor{\bsnm{Raslan}, \binits{M.}},
\oauthor{\bsnm{Schneider}, \binits{R.}}:
A theoretical analysis of deep neural networks and parametric pdes
(2020)
{\href{https://arxiv.org/abs/1904.00377}{{arXiv:1904.00377}}}
{[math.NA]}
\end{botherref}
\endbibitem

\bibitem[\protect\citeauthoryear{Goodfellow et~al.}{2016}]{GoodBengCour16}
\begin{bbook}
\bauthor{\bsnm{Goodfellow}, \binits{I.J.}},
\bauthor{\bsnm{Bengio}, \binits{Y.}},
\bauthor{\bsnm{Courville}, \binits{A.}}:
\bbtitle{Deep Learning}.
\bpublisher{MIT Press},
\blocation{Cambridge, MA, USA}
(\byear{2016})
\end{bbook}
\endbibitem

\bibitem[\protect\citeauthoryear{LeCun et~al.}{2015}]{lecun2015deep}
\begin{barticle}
\bauthor{\bsnm{LeCun}, \binits{Y.}},
\bauthor{\bsnm{Bengio}, \binits{Y.}},
\bauthor{\bsnm{Hinton}, \binits{G.}}:
\batitle{Deep learning}.
\bjtitle{nature}
\bvolume{521}(\bissue{7553}),
\bfpage{436}--\blpage{444}
(\byear{2015})
\end{barticle}
\endbibitem

\bibitem[\protect\citeauthoryear{Berner et~al.}{2021}]{berner2021modern}
\begin{botherref}
\oauthor{\bsnm{Berner}, \binits{J.}},
\oauthor{\bsnm{Grohs}, \binits{P.}},
\oauthor{\bsnm{Kutyniok}, \binits{G.}},
\oauthor{\bsnm{Petersen}, \binits{P.}}:
The modern mathematics of deep learning.
arXiv preprint arXiv:2105.04026,
86--114
(2021)
\end{botherref}
\endbibitem

\bibitem[\protect\citeauthoryear{Kutyniok}{2022}]{kutyniok2022mathematics}
\begin{botherref}
\oauthor{\bsnm{Kutyniok}, \binits{G.}}:
The mathematics of artificial intelligence.
arXiv preprint arXiv:2203.08890
(2022)
\end{botherref}
\endbibitem

\bibitem[\protect\citeauthoryear{Kleikamp}{2024}]{kleikamp2024application}
\begin{botherref}
\oauthor{\bsnm{Kleikamp}, \binits{H.}}:
Application of an adaptive model hierarchy to parametrized optimal control
  problems.
arXiv preprint arXiv:2402.10708
(2024)
\end{botherref}
\endbibitem

\bibitem[\protect\citeauthoryear{Kleikamp and
  Ohlberger}{2024}]{kleikamp2024adaptivemodelhierarchiesmultiquery}
\begin{botherref}
\oauthor{\bsnm{Kleikamp}, \binits{H.}},
\oauthor{\bsnm{Ohlberger}, \binits{M.}}:
Adaptive Model Hierarchies for Multi-Query Scenarios
(2024).
\url{https://arxiv.org/abs/2411.17252}
\end{botherref}
\endbibitem

\bibitem[\protect\citeauthoryear{Kelley}{1999}]{kelley1999iterative}
\begin{bbook}
\bauthor{\bsnm{Kelley}, \binits{C.T.}}:
\bbtitle{Iterative Methods for Optimization}.
\bpublisher{SIAM},
\blocation{Philadelphia, PA, USA}
(\byear{1999})
\end{bbook}
\endbibitem

\bibitem[\protect\citeauthoryear{Rasmussen and
  Williams}{2006}]{rasmussen2006gaussian}
\begin{bbook}
\bauthor{\bsnm{Rasmussen}, \binits{C.E.}},
\bauthor{\bsnm{Williams}, \binits{C.K.}}:
\bbtitle{Gaussian Processes for Machine Learning, Ser. Adaptive Computation and
  Machine Learning}
vol. \bseriesno{38},
pp. \bfpage{715}--\blpage{719}
(\byear{2006})
\end{bbook}
\endbibitem

\bibitem[\protect\citeauthoryear{Schaback and
  Wendland}{2006}]{schaback2006kernel}
\begin{barticle}
\bauthor{\bsnm{Schaback}, \binits{R.}},
\bauthor{\bsnm{Wendland}, \binits{H.}}:
\batitle{Kernel techniques: from machine learning to meshless methods}.
\bjtitle{Acta numerica}
\bvolume{15},
\bfpage{543}--\blpage{639}
(\byear{2006})
\end{barticle}
\endbibitem

\bibitem[\protect\citeauthoryear{Santin and Haasdonk}{2019}]{santin2019kernel}
\begin{barticle}
\bauthor{\bsnm{Santin}, \binits{G.}},
\bauthor{\bsnm{Haasdonk}, \binits{B.}}:
\batitle{Kernel methods for surrogate modeling}.
\bjtitle{Model Order Reduction}
\bvolume{1},
\bfpage{311}--\blpage{354}
(\byear{2019})
\end{barticle}
\endbibitem

\bibitem[\protect\citeauthoryear{Klein and
  Ohlberger}{2025}]{klein_2025_15063806}
\begin{botherref}
\oauthor{\bsnm{Klein}, \binits{B.}},
\oauthor{\bsnm{Ohlberger}, \binits{M.}}:
Software for Multi-fidelity Learning of Reduced Order Models for Parabolic PDE
  Constrained Optimization.
Zenodo
(2025).
\doiurl{10.5281/zenodo.15063806}
\end{botherref}
\endbibitem

\bibitem[\protect\citeauthoryear{Wenzel
  et~al.}{2023}]{wenzel2023datadrivenkerneldesignsoptimized}
\begin{botherref}
\oauthor{\bsnm{Wenzel}, \binits{T.}},
\oauthor{\bsnm{Marchetti}, \binits{F.}},
\oauthor{\bsnm{Perracchione}, \binits{E.}}:
Data-driven kernel designs for optimized greedy schemes: A machine learning
  perspective
(2023)
{\href{https://arxiv.org/abs/2301.08047}{{arXiv:2301.08047}}}
{[math.NA]}
\end{botherref}
\endbibitem

\bibitem[\protect\citeauthoryear{Yu et~al.}{2018}]{yu2018deep}
\begin{barticle}
\bauthor{\bsnm{Yu}, \binits{B.}}, \betal:
\batitle{The deep ritz method: a deep learning-based numerical algorithm for
  solving variational problems}.
\bjtitle{Communications in Mathematics and Statistics}
\bvolume{6}(\bissue{1}),
\bfpage{1}--\blpage{12}
(\byear{2018})
\end{barticle}
\endbibitem

\bibitem[\protect\citeauthoryear{Tanyu et~al.}{2023}]{tanyu2023deep}
\begin{barticle}
\bauthor{\bsnm{Tanyu}, \binits{D.N.}},
\bauthor{\bsnm{Ning}, \binits{J.}},
\bauthor{\bsnm{Freudenberg}, \binits{T.}},
\bauthor{\bsnm{Heilenk{\"o}tter}, \binits{N.}},
\bauthor{\bsnm{Rademacher}, \binits{A.}},
\bauthor{\bsnm{Iben}, \binits{U.}},
\bauthor{\bsnm{Maass}, \binits{P.}}:
\batitle{Deep learning methods for partial differential equations and related
  parameter identification problems}.
\bjtitle{Inverse Problems}
\bvolume{39}(\bissue{10}),
\bfpage{103001}
(\byear{2023})
\end{barticle}
\endbibitem

\end{thebibliography}

\begin{appendices}

\section{\Gls{ML} a posteriori error estimator}
\label{sec:appendix_proofs}

\subsection{Proof of Lemma \ref{lem:error_bound_u_ML}}

\begin{proof}
The high-fidelity solution map $\mu \mapsto u_h(\mu) \in Q^\text{pr}_{\Delta t}(0,T; V_h)$ is Fr\'{e}chet differentiable. For all $k \in \mathbb{K}$ and $i \in \{1, \dots, P \}$. $d_{\mu_i}u^k_h(\mu)$ can be obtained as the unique solution of
\begin{equation}
\label{eq:eq_du_h}
d_{\mu_i}r^k_\text{pr}(u^k_h, v;\mu) = a(d_{\mu_i}u^k_h(\mu), v; \mu) + \frac{1}{\Delta t }(d_{\mu_i}u_h^k(\mu)- d_{\mu_i}u_h^{k-1}(\mu), v)_{L^2(\Omega)},
\end{equation}
for all $v \in V_h$ with initial condition $d_{\mu_i}u^0_h(\mu) = 0$, cf. \cite[Section 1.6.1]{hinze2009optimization}. Adding \eqref{eq:residuum_dmu_ML} and \eqref{eq:eq_du_h} gives
\begin{equation*}
a(d_{\mu_i}e^k, v; \mu) + \frac{1}{\Delta t }(d_{\mu_i}e^k(\mu)- d_{\mu_i}e^{k-1}(\mu), v)_{L^2(\Omega)} = \mathcal{R}^k_\text{ML}(v; \mu) + d_{\mu_i}r^k_\text{pr}(e^k(\mu), v;\mu),
\end{equation*}
where $e^k(\mu):= u^k_h(\mu) - u^k_\text{ML}(\mu)$ and $d_{\mu_i}e^k(\mu):= d_{\mu_i}u^k_h(\mu) - d_{\mu_i}u^k_\text{ML}(\mu)$. Setting $v := d_{\mu_i}e^k(\mu)$ and using 
\begin{equation*}
d_{\mu_i}r^k_\text{pr}(e^k(\mu), v;\mu) = d_{\mu_i} a(-e^k(\mu), v;\mu) \leq \gamma_{a_{\mu_i}}(\mu)\|e^k(\mu)\|_{V_h}\|v\|_{V_h},
\end{equation*} 
for all $v \in V_h$ gives 
\begin{gather*}
(d_{\mu_i}e^k(\mu), d_{\mu_i}e^k(\mu))_{L^2(\Omega)}  + \Delta t a(d_{\mu_i}e^k, d_{\mu_i}e^k(\mu); \mu) \\
\leq (d_{\mu_i}e^{k-1}(\mu), d_{\mu_i}e^k(\mu))_{L^2(\Omega)} + \Delta t \left(\| \mathcal{R}^k_\text{ML}(\cdot\,; \mu)\|_{V'_h} + \gamma_{a_{\mu_i}}(\mu)\|e^k(\mu)\|_{V_h}\right)\|d_{\mu_i}e^k(\mu)\|_{V_h}
\end{gather*}
Analogue to the proof of Proposition 4.1 in \cite{grepl2005posteriori} remark firstly
\begin{gather*}
(d_{\mu_i}e^{k-1}(\mu), d_{\mu_i}e^k(\mu))_{L^2(\Omega)} \\
\leq \frac{1}{2}\left(
(d_{\mu_i}e^{k-1}(\mu), d_{\mu_i}e^{k-1}(\mu))_{L^2(\Omega)} + 
(d_{\mu_i}e^{k}(\mu), d_{\mu_i}e^k(\mu))_{L^2(\Omega)}
\right)
\end{gather*}
and secondly 
\begin{gather*}
\left(\| \mathcal{R}_\text{ML}(\cdot\,; \mu)\|_{V'_h} + \gamma_{a_{\mu_i}}(\mu)\|e^k(\mu)\|_{V_h}\right) \|d_{\mu_i}e^k(\mu)\|_{V_h}\\
\leq
\frac{1}{2 \alpha_\text{LB}(\mu)} \left(\| \mathcal{R}^k_\text{ML}(\cdot\,; \mu)\|_{V'_h} + \gamma_{a_{\mu_i}}(\mu)\|e^k(\mu)\|_{V_h}\right)^2 + \frac{1}{2} a(d_{\mu_i}e^k(\mu), d_{\mu_i}e^k(\mu); \mu),
\end{gather*}
resulting in 
\begin{gather*}
(d_{\mu_i}e^k(\mu), d_{\mu_i}e^k(\mu))_{L^2(\Omega)} - (d_{\mu_i}e^{k-1}(\mu), d_{\mu_i}e^{k-1}(\mu))_{L^2(\Omega)} + \Delta t a(d_{\mu_i}e^k(\mu), d_{\mu_i}e^k(\mu); \mu) \\
\leq \frac{\Delta t}{\alpha_\text{LB}(\mu)} \left(\| \mathcal{R}^k_\text{ML}(\cdot\,; \mu)\|_{V'_h}^2 + \gamma^2_{a_{\mu_i}}(\mu)\|e^k(\mu)\|^2_{V_h} + 2\gamma_{a_{\mu_i}}(\mu)\|\mathcal{R}^k_\text{ML}(\cdot\,; \mu)\|_{V'_h}\|e^k(\mu)\|_{V_h}\right)
\end{gather*}
Taking the sum over $k \in \mathbb{K}$ and using $d_{\mu_i}e^0(\mu) = 0$ results in
\begin{align*}
\alpha_\text{LB}(\mu) \mathcal{S}^2(d_{\mu_i}e(\mu)) \leq & \alpha^{-1}_\text{LB}(\mu) \mathcal{T}^2(\mathcal{R}_\text{ML}(\cdot\,; \mu)) \\
+ & \alpha^{-1}_\text{LB}(\mu) \gamma^2_{a_{\mu_i}}(\mu) \mathcal{S}^2(e(\mu)) + 2\alpha^{-1}_\text{LB}(\mu) \gamma_{a_{\mu_i}}(\mu)\mathcal{T}(\mathcal{R}_\text{ML}(\cdot\,; \mu))\mathcal{S}(e(\mu))
\end{align*}
with $e(\mu) := u_h(\mu) - u_\text{ML}(\mu)$. This proves the claim.
\end{proof}

\subsection{Proof of Theorem \ref{thm:error_est_ML}}
\begin{proof}
For \eqref{eq:J_ML} consider the first part of the proof to Theorem 9 in \cite{qian2017certified}, obtaining 
\begin{equation*}
e^{\mathcal{J}}(\mu) :=  \mathcal{J}_h(\mu) - \mathcal{J}_\text{ML}(\mu) = \Delta t \sum_{k = 1}^K \left[2d(u^k_\text{ML}(\mu), e^k(\mu)) + l^k(e^k(\mu)) + d(e^k(\mu),e^k(\mu))\right]
\end{equation*}
for $e^k(\mu) := u^k_h(\mu) - u^k_\text{ML}(\mu)$. Substituting $l^k(e^k(\mu)) = d(g^k_\text{ref}, e^k(\mu))$ results in
\begin{equation*}
|e^{\mathcal{J}}(\mu)| \leq \gamma_d  \Delta t \sum_{k = 1}^K \left[2 \|u^k_\text{ML}(\mu)\|_{V_h}\|e^k(\mu)\|_{V_h} + \|g^k_\text{ref}\|_{V_h}\|e^k(\mu)\|_{V_h} + \|e^k(\mu)\|^2_{V_h}\right].
\end{equation*}
Applying Cauchy-Schwarz inequality and using \eqref{eq:u_ML_error_bound} proves the claim. To prove the second inequality \eqref{eq:nabla_J_ML} denote
\begin{align*}
e^{d_{\mu_i} \mathcal{J}}(\mu) :=& d_{\mu_i} \mathcal{J}_h(\mu) - d_{\mu_i} \mathcal{J}_\text{ML}(\mu) \\
=& \Delta t \sum_{k = 1}^K \left[
\underbrace{2d(u^k_h(\mu), d_{\mu_i} u^k_h(\mu)) - 2d(u^k_\text{ML}(\mu), d_{\mu_i} u^k_\text{ML}(\mu))}_{=: (\ast)} + d(g^k_\text{ref}, d_{\mu_i}e^k(\mu))
\right].
\end{align*}
Adding and substracting 
\begin{equation*}
\Delta t \sum_{k = 1}^K \left[ 2d(u^k_h(\mu), d_{\mu_i} u^k_\text{ML}(\mu)) + 2d(u^k_\text{ML}(\mu), d_{\mu_i}e^k(\mu)) \right]
\end{equation*}
to $(\ast)$ gives 
\begin{align*}
(\ast) = \Delta t \sum_{k = 1}^K & \left[ 
2d(u^k_h(\mu), d_{\mu_i} e^k(\mu)) + 2d(e^k(\mu), d_{\mu_i} u^k_\text{ML}(\mu)) \right]\\
+ \Delta t \sum_{k = 1}^K & \left[ 2d(u^k_\text{ML}(\mu), d_{\mu_i}e^k(\mu)) - 2d(u^k_\text{ML}(\mu), d_{\mu_i}e^k(\mu)) \right] \\
= \Delta t \sum_{k = 1}^K & \left[ 2d(e^k(\mu), d_{\mu_i} e^k(\mu)) + 2d(e^k(\mu), d_{\mu_i} u^k_\text{ML}(\mu)) + 2d(u^k_\text{ML}(\mu), d_{\mu_i}e^k(\mu))\right].
\end{align*}
Resulting in 
\begin{gather*}
\begin{aligned}
|e^{d_{\mu_i}\mathcal{J}}(\mu)| \leq \gamma_d\Delta t \sum_{k = 1}^K & \left[2\|e^k(\mu)\|_{V_h}\|d_{\mu_i}e^k(\mu)\|_{V_h} + 2\|e^k(\mu)\|_{V_h}\|d_{\mu_i}u^k_\text{ML}(\mu)\|_{V_h}\right]  \\
+ \gamma_d\Delta t \sum_{k = 1}^K & \left[2\|u^k_\text{ML}(\mu)\|_{V_h}\|d_{\mu_i}e^k(\mu)\|_{V_h}
+ \|g^k_\text{ref}\|_{V_h}\|d_{\mu_i}e^k(\mu)\|_{V_h}\right].
\end{aligned}
\end{gather*}
The claim follows with Lemma \ref{lem:error_bound_u_ML}.
\end{proof}
\end{appendices}

\end{document}